\newcommand{\mE}{\mathbb{E}}
\newcommand{\mN}{\mathbb{N}}
\newcommand{\mP}{\mathbb{P}}
\newcommand{\mR}{\mathbb{R}}
\newcommand{\cF}{\mathcal{F}}
\newcommand{\la}{\left\lbrack}
\newcommand{\ra}{\right\rbrack}
\newcommand{\1}{\mathds{1}}
\newcommand\dd{\mathrm{d}}
\newtheorem {definition}{Definition}[section]
\newtheorem {theorem}[definition]{Theorem}
\newtheorem{proposition}[definition]{Proposition}
\newtheorem {lemma}[definition]{Lemma}
\newtheorem {corollary}[definition]{Corollary}
\newtheorem {remark}[definition]{Remark}
\DeclareMathOperator*{\wlim}{w-lim}
\title[L\MakeLowercase{évy processes conditioned to avoid an interval}]{Lévy processes with finite variance conditioned to avoid an interval}
\author{Leif D\"oring}
\address{Leif D\"oring: University of Mannheim, Institute of Mathematics, 68161 Mannheim, Germany.}
\email{doering@uni-mannheim.de}
\author{Alexander R. Watson}
\address{Alexander R. Watson: University of Manchester, School of Mathematics, Manchester, M13 9PL, UK.}
\email{alex.watson@manchester.ac.uk}
\author{Philip Weißmann$^{*}$}\thanks{$^{*}$Supported by the Research Training Group "Statistical Modeling of Complex Systems" funded by the German Science Foundation}
\address{Philip Weißmann: University of Mannheim, Institute of Mathematics, 68161 Mannheim, Germany.}
\email{hweissma@mail.uni-mannheim.de}
\begin{document}
\setlength{\parindent}{0pt}

\begin{abstract}
Conditioning Markov processes to avoid a set is a classical problem that has been studied in many settings. In the present article we study the question if a L\'evy process can be conditioned to avoid an interval and, if so, the path behavior of the conditioned process. For L\'evy processes with finite second moments we show that conditioning is possible and identify the conditioned process as an $h$-transform of the original killed process. The $h$-transform is explicit in terms of successive overshoot distributions and is used to prove that the conditioned process diverges to $+\infty$ and $-\infty$ with positive probabilities.
\end{abstract}

\maketitle 
\tableofcontents
\section{Introduction}
Conditioning Markov processes to avoid sets is a classical problem. Indeed, suppose $(\mP^x)_{x\in E}$
is a family of Markov probabilities on the state space $E$, and that  $T$ is the first hitting time of a fixed set. When $T$ is almost surely finite, it is non-trivial to construct and characterise the conditioned process through the natural limiting procedure
\begin{align}\label{cond}
	&\lim_{s\to \infty}\mP^x(\Lambda\,|\, s+t<T) 
\end{align}
or the randomized version
\begin{align}\label{cond_exp}
	&\lim_{q \to 0}\mP^x(\Lambda, t<e_q\,|\, e_q<T),
\end{align}
for $\Lambda\in \mathcal F_t$ and $x\in E$. Here, $(\mathcal{F}_t)_{t\geq 0}$ denotes the natural filtration of the underlying Markov process and $e_q$ are independent exponentially distributed random variables with parameter $q>0$. 

\smallskip

A classical example is Brownian motion conditioned to avoid the negative half-line.
In this case, the limits \eqref{cond} and \eqref{cond_exp} lead to a so-called Doob $h$-transform
of the Brownian motion killed on entering the negative half-line, by the positive
harmonic function $h(x) = x$ on $(0,\infty)$. This Doob $h$-transform
turns out (see Chapter VI.3 of \cite{Rev_Yor_01}) to be the Bessel process of dimension $3$,
which is transient. This example is typical, in that a conditioning procedure
leads to a new process which is transient where the original process was recurrent.
\smallskip

Extensions of this result have been obtained in several directions, most notably to random walks and L\'evy processes. A prominent example with several applications is that of a L\'evy process conditioned to stay positive, which was found by Chaumont and Doney \cite{Chau_Don_01} using
the randomised conditioning \eqref{cond_exp}.
In that case, the associated harmonic function $h$ is given by the potential function of the descending ladder height process. Similarly, Bertoin and Doney \cite{Bert_Don_01} have shown how to condition a random walk to stay non-negative. Other examples include random walks conditioned to stay in a cone (Denisov and Wachtel \cite{Den_Wac_01}), isotropic stable processes conditioned to stay in a cone (Kyprianou et al. \cite{Kyp_Riv_Sat_02}), spectrally negative L\'evy processes conditioned to stay in an interval (Lambert \cite{Lam_01}), subordinators conditioned to stay in an interval (Kyprianou et al. \cite{Kyp_Riv_Sen_02}), L\'evy processes conditioned to avoid the origin 
(Pant\'i  \cite{Pan_01} and Yano \cite{Yan_01}) and
self-similar Markov processes conditioned to avoid the origin (Kyprianou et al. \cite{Kyp_Riv_Sat_01}).\smallskip

The purpose of this article is to take advantage of the path discontinuities of Lévy processes and to condition them to avoid an interval. In Döring et al. \cite{Doer_Kyp_Wei_01} this problem was tackled for strictly stable processes since their structure as self-similar Markov processes allowed to deduce the right harmonic functions. The proofs were based on the so called deep factorisation (see Kyprianou et al.\ \cite{Kyp_02,Kyp_Riv_Sen_01}), which analyses stable process using the Lamperti-Kiu transform. In the present article, we consider Lévy processes with zero mean and finite variance. This assumes less structure on the L\'evy process, but at the same time excludes the stable processes, which have infinite second moments. The discrete-time analogue of our problem was considered by
Vysotsky \cite{Vys_01}, who used a Doob $h$-transform to condition a centred random walk with finite second moment to avoid an interval. One of the harmonic functions we will discover is the analogue of the harmonic function found by Vysotsky for random walks, but the techniques needed are different.\smallskip

Before presenting our results, we introduce the most important definitions and results concerning Lévy processes. More details can be found, for example, in Bertoin \cite{Bert_01}, Kyprianou \cite{Kyp_01} or Sato \cite{Sat_01}.\smallskip

\textbf{L\'evy Processes:} A Lévy process $\xi$ is a stochastic process with stationary and independent increments whose trajectories are almost surely right-continuous with left-limits (RCLL).
For each $x \in \mathbb{R}$, we define the probability measure $\mP^x$ under which
the canonical process $\xi$ starts at $x$ almost surely. We write $\mP$ for the measure $\mP^0$. The dual measure $\hat{\mP}^x$ denotes the law of the so-called dual process $-\xi$ started at $x$.
A Lévy process can be identified using its characteristic exponent $\Psi$, defined by the equation
$\mE[ e^{\mathrm{i} q \xi_t}] = e^{-t\Psi(q)}$, $q\in \mR$, which has the Lévy-Khintchine representation:
\[ \Psi(q) = \mathrm i a q + \frac{1}{2} \sigma^2q^2
  + \int_{\mathbb{R}} (1-e^{\mathrm i q x} + \mathrm i q x \1_{\{\lvert x \rvert < 1\}} ) \, \Pi(\mathrm d x),\quad q \in \mR,
\]
where $a\in\mR$ is the so-called centre of process, $\sigma^2\geq 0$ is the variance of the Brownian component, and the L\'evy measure $\Pi$ is a real measure with no atom at $0$ satisfying $\int (x^2\wedge 1) \Pi(dx)<\infty$.
\smallskip

Our main assumption is:
\begin{align*}
 (\textbf{A}) &\qquad \text{$\xi$ has zero mean and finite variance, and is not a compound Poisson process}.
\end{align*}
We define $T_B=\inf\{ t \geq 0: \xi_t \in B \}$ for any open or closed set $B \subseteq \mR$. This is known to be a stopping time with respect to the right-continuous natural enlargement of the filtration induced by $\xi$, which we denote by $(\mathcal{F}_t)_{t\geq 0}$.
For certain auxiliary results, we will need to distinguish two cases:
\begin{align*}
	(\textbf{B}) \qquad \Pi(b-a,\infty)>0,\text{ i.e., upward jumps avoiding $[a,b]$ are possible}
\end{align*}
and 
\begin{align*}
	\qquad(\hat{\textbf{B}}) \qquad \Pi(-\infty,a-b)>0,\text{ i.e., downward jumps avoiding $[a,b]$ are possible}
\end{align*}

 \smallskip

\textbf{Killed L\'evy processes and $h$-transforms:}
For $a<b$ the killed transition measures are defined as
$$p^{[a,b]}_t(x,\dd y)=\mP^x(\xi_t \in \dd y, t< T_{[a,b]}), \quad t\geq 0.$$
The corresponding sub-Markov process is called the L\'evy process killed in $[a,b]$.
A harmonic function for the killed process is a measurable function $h : \mR \backslash [a,b] \rightarrow [0,\infty)$ such that
\begin{align}\label{eq_def_harm}
	\mE^x \big[ \1_{\{ t < T_{[a,b]} \}} h(\xi_t) \big] = h(x),
	\quad x \in \mR \backslash [a,b], t \geq 0
\end{align}
A harmonic function taking only strictly positive values is called a positive harmonic function. Thanks to the Markov property, harmonicity is equivalent to $(\1_{\{ t < T_{[a,b]} \}} h(\xi_t))_{t \geq 0}$ being a $\mP^x$-martingale with respect to $(\cF_t)_{t \geq 0}$. When $h$ is a positive harmonic function,  the associated Doob $h$-transform is defined via the change of measure
\begin{align}\label{def_htrafo}
	\mP_h^x(\Lambda) := \mE^x \Big[ \1_\Lambda \1_{\{ t < T_{[a,b]} \}} \frac{h(\xi_t)}{h(x)}  \Big],\quad x\in\mR \backslash [a,b],
\end{align}
for $\Lambda \in \cF_t$.
From Chapter 11 of Chung and Walsh \cite{Chu_Wal_01}, we know that under $\mP_h^{x}$ the canonical process is a conservative strong Markov process. In Chapter 11 of Chung and Walsh \cite{Chu_Wal_01} it is shown that (\ref{def_htrafo}) extends from deterministic times to $(\cF_t)_{t \geq 0}$-stopping times $T$;
that is,
\begin{align}\label{eq_htrafo_stoppingtime}
\mP_{h}^x(\Lambda, T <\zeta) = \mE^x \Big[\1_\Lambda \1_{\{ T<T_{[a,b]} \}} \frac{h(\xi_T)}{h(x)} \Big], \quad x \notin [a,b],
\end{align}
for $\Lambda\in \mathcal F_T$.\smallskip

\textbf{Ladder height processes and potential functions:} A crucial ingredient in our analysis is the potential function $U_-$ of the descending ladder height process, which is positive harmonic for a L\'evy process killed on the negative half-line. To introduce $U_-$, some notation is needed. Denote the local time of the Markov process $(\sup_{s \leq t} \xi_s - \xi_t)_{t\ge 0}$ at $0$ by $L$, which is also called the local time of $\xi$ at the maximum. Let $L_t^{-1} = \inf\{s>0 : L_s > t\}$ denote the inverse local time at the maximum and $\kappa(q) = -\log \mE \big[ e^{- q L^{-1}_1} \big]$, for $q \geq 0$, the 
Laplace exponent of $L^{-1}$. We define $H_t =\sup_{s\leq L^{-1}_t}\xi_s$, the so-called (ascending) ladder height process. It is well-known that $H$ is a subordinator and we denote by $a_+$ the drift coefficient of $H$, and by $\mu_+$ its Lévy measure.
Under the dual measure $\hat\mP$, the process $L^{-1}$ is the inverse
local time at the minimum, and we denote its Laplace exponent by $\hat\kappa$.
Still under this dual measure, $H$ is the descending ladder height process,
and we define $a_-$ and $\mu_-$ to be its drift coefficient and Lévy measure.
\smallskip

The $q$-resolvents of $H$, for $q \geq 0$, will be denoted by $U_+^q$; that is,
$$U^q_+(\dd x) \coloneqq \mE \Big[ \int\limits_{[0,\infty)} e^{-qt} \1_{\{ H^+_t \in \dd x, L_t^{-1} <\infty\}} \, \dd t \Big].$$ 
For $q=0$ we abbreviate $U_+(\dd x) = U_+^0(\dd x)$, and denote the so-called potential function by $U_+(x) = U_+([0,x])$, for $x\geq 0$. We define $U_-^q$ and $U_-$ according to the same procedure for the descending ladder height process. If $\xi$ is not a compound Poisson process, it is known that $U_+$ and $U_-$ are continuous.

\section{Main results}\label{sec:main}
Before stating the main results, some more notation is needed to define our harmonic functions.
We first define inductively the sequence of successive stopping times at which the process jumps crossing $a$ or $b$:
\begin{align*}
	\tau_0 &\coloneqq 0,\\
	\tau_{k+1} &\coloneqq \inf\{ t > \tau_k: \xi_{t-} >b, \xi_t \leq b \} \wedge  \inf\{ t > \tau_k: \xi_{t-} <a, \xi_t \geq a \}.
\end{align*}
Second, let $K^\dag \coloneqq \inf \{ k \geq 1: \tau_k = T_{[a,b]}\}$ be the index indicating the time at which the process hits the given interval, let
$$\nu_k^x(\dd y) = \mP^x(\xi_{\tau_k} \in \dd y, \tau_k < \infty, k\leq K^{\dagger} ),
\quad x,y \in \mR \setminus [a,b]$$
be the distribution of the position of $\xi$ after its $k$-th jump across the interval,
for $k \geq 0$.
\smallskip

It is important to note that each $\nu_k^x$ can be expressed explicitly in terms of the Lévy measures and potential measures of the ladder height processes. Indeed, $\nu^x_1$ is nothing but an overshoot distribution, for which a formula is given in Proposition III.2 of Bertoin \cite{Bert_01}, using that the overshoot of $\xi$ has the same distribution as the overshoot of the corresponding ladder height subordinator $H$. Applying the strong Markov property successively yields explicit expressions for all other $\nu^x_k$.

\begin{theorem}\label{thm_harmonic}
If Assumptions $(A)$ and $(B)$ hold, then the function 
\begin{align*}
h_+(x) &:=
\begin{cases}
\sum\limits_{k=0}^\infty \int\limits_{(b,\infty)} U_-(y-b) \,\nu^x_{2k}(\dd y) & \text{if } x>b\\
\sum\limits_{k=0}^\infty \int\limits_{(b,\infty)} U_-(y-b) \,\nu^x_{2k+1}(\dd y) &  \text{if } x<a
\end{cases}
\end{align*}
is a positive harmonic function for $\xi$ killed on entering $[a,b]$, i.e.
$$\mE^x \big[ \1_{\{ t < T_{[a,b]} \}} h_+(\xi_t) \big] = h_+(x), \quad t\geq 0,x \in \mR\setminus [a,b].$$
\end{theorem}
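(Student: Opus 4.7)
The strategy is to combine a recursion built into the definition of $h_+$ with the classical harmonicity of $y\mapsto U_-(y-b)$ for $\xi$ killed on entering $(-\infty,b]$. Applying the strong Markov property at $\tau_1$ to the definition of $\nu_k^x$ yields $\nu_{k+1}^x(\dd y) = \int \nu_k^z(\dd y)\,\nu_1^x(\dd z)$. Extracting the $k=0$ term $\nu_0^x = \delta_x$ and reindexing the remainder produces the coupled recursions
\begin{align*}
  h_+(x) &= U_-(x-b) + \mE^x\bigl[\1_{\{\tau_1 < T_{[a,b]}\}}\, h_+(\xi_{\tau_1})\bigr], \qquad x > b,\\
  h_+(x) &= \mE^x\bigl[\1_{\{\tau_1 < T_{[a,b]}\}}\, h_+(\xi_{\tau_1})\bigr], \qquad x < a,
\end{align*}
in which $\xi_{\tau_1}$ always lies on the side of $[a,b]$ opposite to $x$. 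Positivity of $h_+$ is then immediate: for $x > b$ the $k=0$ contribution $U_-(x-b)$ is already strictly positive, and for $x < a$ assumption $(B)$ guarantees that $\nu_1^x$ places mass on $(b,\infty)$, where $U_-(y-b) > 0$.

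For the harmonicity claim, I set $J(t,x) := \mE^x[\1_{\{t<T_{[a,b]}\}}\, h_+(\xi_t)]$ and $\sigma_b := T_{(-\infty,b]}$, and split $J(t,x)$ on the events $\{t<\tau_1\}$ and $\{\tau_1 \leq t < T_{[a,b]}\}$. For $x > b$ the first event coincides with $\{t<\sigma_b\}$; expanding $h_+(\xi_t)$ on it via the recursion above, and applying the classical identity $\mE^x[\1_{\{t<\sigma_b\}}\, U_-(\xi_t-b)] = U_-(x-b)$ together with the strong Markov property at $t$, handles this piece. The second piece is treated by strong Markov at $\tau_1$, producing the term $\mE^x[\1_{\{\tau_1 \leq t,\,\tau_1 < T_{[a,b]}\}}\, J(t-\tau_1,\xi_{\tau_1})]$ with $\xi_{\tau_1} < a$. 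Comparison with the first recursion reduces harmonicity of $h_+$ at $x > b$ to the identity $J(\cdot,z) = h_+(z)$ for $z < a$; an entirely analogous computation for $x < a$ (not requiring $U_-$) gives the converse implication, so after one splitting $J(t,x) - h_+(x)$ equals $\mE^x[\1_{\{\tau_1 \leq t,\,\tau_1 < T_{[a,b]}\}}\, (J(t-\tau_1,\xi_{\tau_1}) - h_+(\xi_{\tau_1}))]$.

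The two sides of the interval are thus coupled through $\tau_1$, and breaking this circularity is the main obstacle. The plan is to iterate the same decomposition at $\tau_2,\tau_3,\ldots$, so that after $N$ steps
\[
J(t,x) - h_+(x) = \mE^x\bigl[\1_{\{\tau_N \leq t,\,\tau_N < T_{[a,b]}\}}\, (J(t-\tau_N,\xi_{\tau_N}) - h_+(\xi_{\tau_N}))\bigr],
\]
and then to let $N\to\infty$. The key observation is that every crossing $\tau_k$ with $\tau_k < T_{[a,b]}$ is realised by a single jump of size strictly greater than $b-a$; since the Lévy mass of such jumps is finite, a.s.\ only finitely many $\tau_k$ can be at most $t$ on $\{t<T_{[a,b]}\}$, so the indicator above vanishes a.s. Together with a uniform bound on the integrand coming from the convergence of the series defining $h_+$, a dominated convergence argument then completes the proof.
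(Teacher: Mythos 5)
Your route is genuinely different from the paper's. The paper never works with the renewal decomposition at the crossing times directly: it first proves (Proposition \ref{prop_asymp}) that $\hat\kappa(q)^{-1}\mP^x(e_q<T_{[a,b]},\xi_{e_q}>b)$ is dominated by $h_+^q(x)\le h_+(x)$ and converges to $h_+(x)$ as $q\searrow 0$, and then follows the Chaumont--Doney scheme: Fatou gives $\mE^x[\1_{\{t<T_{[a,b]}\}}h_+(\xi_t)]\le h_+(x)$, and the domination then upgrades the inequality to an equality by dominated convergence. Your ingredients are sound as far as they go: the one-step recursion $h_+(x)=U_-(x-b)\1_{\{x>b\}}+\mE^x[\1_{\{\tau_1<T_{[a,b]}\}}h_+(\xi_{\tau_1})]$ is exactly \eqref{cc} plus Fubini; the identification of $\{t<\tau_1\}\cap\{t<T_{[a,b]}\}$ with $\{t<T_{(-\infty,b]}\}$ for $x>b$ is correct; and the telescoping identity together with the observation that each crossing is a jump of size exceeding $b-a$, of which only finitely many occur before $t$, is the natural continuous-time analogue of Vysotsky's argument.

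The gap is in the final limit, exactly where the difficulty flagged in the paper's Remark after Corollary \ref{thm_harmonic_h} lives: you assert ``a uniform bound on the integrand coming from the convergence of the series defining $h_+$'', but no such bound exists for free. Two things are missing. First, to subtract $h_+(x)$ from $J(t,x)$ and telescope, you need $J(t,x)<\infty$ a priori; this does not follow from the recursion (the unknown $J$ sits on both sides) and needs a separate estimate, e.g.\ splitting over the crossing interval containing $t$ and combining the excessiveness of $U_\mp$ for one-sided killing with Proposition \ref{lemma_finite} and the geometric bounds $\nu^x_k(\cdot)\le\gamma^k$ from its proof. Second, the tail of the series for $h_+$ only controls the term $\mE^x[\1_{\{\tau_N<T_{[a,b]}\}}h_+(\xi_{\tau_N})]$; the other half of the integrand, $\mE^x[\1_{\{\tau_N\le t,\,\tau_N<T_{[a,b]}\}}J(t-\tau_N,\xi_{\tau_N})]$, is not dominated by it, and to kill it you must either first prove $J(s,z)\le C\,h_+(z)$ or undo the strong Markov property to rewrite it as $\mE^x[\1_{\{\tau_N\le t\}}\1_{\{t<T_{[a,b]}\}}h_+(\xi_t)]$ and again invoke the not-yet-established finiteness of $J(t,x)$. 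Finally, the ``classical identity'' $\mE^x[\1_{\{t<T_{(-\infty,b]}\}}U_-(\xi_t-b)]=U_-(x-b)$ needs care when $b$ is irregular for $(-\infty,b)$; the paper sidesteps this by working throughout with the resolvent identity $\hat\kappa(q)U_-^q(x)=\mP^x(e_q<T_{(-\infty,0]})$ and taking $q\searrow0$ only at the end. None of this looks fatal, and the extra estimates are essentially those already developed in Section \ref{sec_finite}, but as written the proof is not complete.
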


\smallskip

If Assumption $(B)$ is not satisfied, then $h_+$ is always harmonic, but may not be positive.
To be precise, when $(B)$ fails, $h_+$ is positive on $(b,\infty)$ but zero on $(-\infty,a)$.
\smallskip

Similarly, under $(A)$ and $(\hat B)$, the function
\begin{align*}
h_-(x)&=
\begin{cases}
\sum\limits_{k=0}^\infty \int\limits_{(-\infty,a)} U_+(a-y) \,\nu^x_{2k+1}(\dd y) & \text{if } x>b\\
\sum\limits_{k=0}^\infty \int\limits_{(-\infty,a)} U_+(a-y) \,\nu^x_{2k}(\dd y)  & \text{if } x<a
\end{cases}
\end{align*}
is positive harmonic as well. As above, when $(\hat B)$ fails,
$h_-$ remains harmonic, but is positive only on $(-\infty,a)$.
\smallskip

An important corollary of this discussion is the existence of positive harmonic functions under the Assumption $(A)$ only:
\begin{corollary}\label{thm_harmonic_h}
If Assumption $(A)$ holds, then all linear combinations of $h_+$ and $h_-$ with strictly positive coefficients are positive harmonic functions.
\end{corollary}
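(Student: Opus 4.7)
My plan is to reduce the corollary to two elementary ingredients: harmonicity is inherited by linear combinations via linearity of \eqref{eq_def_harm}, and strict positivity is witnessed pointwise by a single explicit term in each defining series.

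For harmonicity, Theorem~\ref{thm_harmonic} and the discussion immediately following it already assert that both $h_+$ and $h_-$ satisfy \eqref{eq_def_harm} under Assumption $(A)$ alone; the auxiliary conditions $(B)$ and $(\hat B)$ are needed only to promote ``harmonic'' to ``strictly positive harmonic''. Since \eqref{eq_def_harm} is linear in $h$, every combination $\alpha h_+ + \beta h_-$ with $\alpha,\beta>0$ is again harmonic for $\xi$ killed in $[a,b]$.

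For strict positivity on $\mR\setminus[a,b]$, I would split according to whether $x>b$ or $x<a$. Because $\tau_0\equiv 0$ and $K^\dag\geq 1$ by definition, the zeroth measure is $\nu_0^x = \delta_x$. Hence, for $x>b$, the $k=0$ summand in the series defining $h_+(x)$ equals $U_-(x-b)$, while every remaining summand is non-negative. Under $(A)$ the process $\xi$ oscillates, so the descending ladder height subordinator is non-killed and non-trivial, and its potential $U_-$ is continuous with $U_-(r)>0$ for every $r>0$; this forces $h_+(x)\geq U_-(x-b)>0$ on $(b,\infty)$. Combined with $h_-(x)\geq 0$, one obtains $\alpha h_+(x)+\beta h_-(x)>0$ there. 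The mirror argument — using that the $k=0$ summand of $h_-(x)$ equals $U_+(a-x)>0$ for $x<a$ — covers the other half of the complement of $[a,b]$.

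I do not anticipate serious obstacles. The only mildly delicate point is the strict positivity of $U_\pm$ on $(0,\infty)$, which follows from the standard fact that a centred, finite-variance Lévy process that is not compound Poisson oscillates (so neither ladder height subordinator is trivial), combined with the continuity of $U_\pm$ recorded in the introduction. Notably, no case analysis on $(B)$ or $(\hat B)$ enters the argument: the $k=0$ contribution already guarantees strict positivity of $h_+$ and $h_-$ on their respective ``natural'' sides of the interval, which is precisely what lets the corollary hold under Assumption $(A)$ alone.
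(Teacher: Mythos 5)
Your argument is correct and is essentially the one the paper intends: the corollary is presented as an immediate consequence of Theorem~\ref{thm_harmonic} and the surrounding discussion, namely that $h_+$ and $h_-$ are each harmonic under $(A)$ alone (with $(B)$, $(\hat B)$ only upgrading positivity on the ``wrong'' side of the interval), that harmonicity is preserved under positive linear combinations, and that strict positivity of $\alpha h_+ + \beta h_-$ follows since the $k=0$ terms give $h_+(x)\geq U_-(x-b)>0$ for $x>b$ and $h_-(x)\geq U_+(a-x)>0$ for $x<a$. Your observation that no case analysis on $(B)$, $(\hat B)$ is needed is exactly the point of the corollary.
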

The harmonic functions $h_+$ and $h_-$ typically do not have a simple closed form (for a positive example see Section \ref{sec:3} below). This seemingly reduces their applicability but they are explicit enough to be used for conditioning purposes. We use them below as a tool to prove that conditioning in the sense of (\ref{cond_exp}) works and, as a consequence of general $h$-transform theory, obtain that the conditioned process is strong Markov. Additionally, it turns out that the harmonic functions are explicit enough to explain the limiting behavior of trajectories under the conditioned law. 

\begin{remark}
Vysotsky \cite{Vys_01} considered the analogous problem for a centred random walk $S=(S_n)_{n \in \mN}$ with finite variance. He derived a harmonic function $V$ which is the discrete analogue of some linear combination of $h_+$ and $h_-$. Proving harmonicity in the discrete-time situation is less involved for the following reason. It is enough to show that $V(S)$ is a discrete-time martingale for which it is enough to derive the martingale property for one time-step. Since, in discrete-time, $1 \leq T_{[a,b]}$ for $x \notin [a,b]$ the computation is direct. The continuous-time situation of L\'evy processes is much more delicate as $t\leq T_{[a,b]}$ does not hold almost surely for any $t\geq 0$.
\end{remark}
With the harmonic functions $h_+$, $h_-$ and their positive linear combinations it is now possible to $h$-transform the killed process as in \eqref{def_htrafo}.  The $h$-transforms $\mP_+$ (resp. $\mP_-$) are defined through \eqref{def_htrafo} with the positive harmonic functions $h_+$ (resp. $h_-$). In the sequel we identify the right ways to condition in order to obtain $h$-transforms with $h_+$ and $h_-$ and then derive the right linear combination of $h_+$ and $h_-$ that corresponds to conditioning the L\'evy process to avoid the interval in the sense of \eqref{cond_exp}.\smallskip

The next proposition gives a probabilistic representation of $\mP_+^x$ by conditioning to avoid $[a,b]$ and staying above $b$ at late times. The analogous conditioning under $(A)$ and ($\hat B$) below the interval results in the $h$-transform $\mP_-^x$.
\begin{proposition}\label{thm_cond_pos}
Assume $(A)$ and $(B)$.  Then
$$\mP_+^x(\Lambda) = \lim_{q \searrow 0} \mP^x(\Lambda, t < e_q \, | \, e_q <T_{[a,b]}, \xi_{e_q} >b),\quad x \notin [a,b],$$
for $\Lambda \in \cF_t$, $t \geq 0$.
\end{proposition}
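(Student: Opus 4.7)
The plan is to write the conditional probability on the right as an expectation of a ratio, show that the ratio converges pointwise to $h_+(\xi_t)/h_+(x)$, and use the harmonicity of $h_+$ to dominate. Set
\[ g_q(y) := \mP^y(e_q < T_{[a,b]},\, \xi_{e_q} > b), \qquad y \notin [a,b]. \]
The Markov property of $\xi$ at time $t$ and the lack of memory of $e_q$ give
\[ \mP^x(\Lambda, t < e_q \mid e_q < T_{[a,b]},\, \xi_{e_q} > b) = e^{-qt}\, \mE^x\!\left[\1_\Lambda \1_{\{t < T_{[a,b]}\}}\, \frac{g_q(\xi_t)}{g_q(x)} \right], \]
so the task reduces to (i) identifying the limit of $g_q(y)/g_q(x)$ as $q \searrow 0$ and (ii) justifying the interchange of limit and expectation.

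For (i), I would decompose $g_q(y)$ according to the successive crossing times $\tau_k$. Take $y>b$ first; the case $y<a$ is identical after the obvious index shift. The key geometric observation is that for $\xi$ starting above $b$ one has $\tau_1 \wedge T_{[a,b]} = T_{(-\infty, b]}$, because every entrance of the lower half-line from above is either by a downward jump (captured by $\tau_1$ if it lands below $a$, by $T_{[a,b]}$ if it lands in $[a,b]$) or continuously, in which case it occurs at $b \in [a,b]$. Iterating the strong Markov property at $\tau_{2k}$ and using the independence of $e_q$ to extract the factor $e^{-q\tau_{2k}}$, one obtains
\[ g_q(y) = \sum_{k=0}^\infty \int_{(b,\infty)} G_q(z)\, \mE^y\!\left[\1_{\{\tau_{2k}<\infty,\, 2k \leq K^\dagger\}} e^{-q \tau_{2k}} \1_{\{\xi_{\tau_{2k}} \in \dd z\}}\right], \]
where $G_q(z) := \mP^z(e_q < T_{(-\infty, b]})$. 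Dividing by $\hat\kappa(q)$ and appealing to the classical Wiener--Hopf asymptotic $G_q(z)/\hat\kappa(q) \to U_-(z-b)$ (with monotone dominator $U_-^q(z-b) \leq U_-(z-b)$; see e.g.\ Bertoin \cite{Bert_01}), together with the monotone convergence of the inner measures up to $\nu_{2k}^y$, one may exchange the limit with the integral. The sum over $k$ is then handled by dominated convergence with the term-wise bound $\int_{(b,\infty)} U_-(z-b)\, \nu_{2k}^y(\dd z)$, which is summable with total sum $h_+(y)<\infty$. Hence $g_q(y)/\hat\kappa(q) \to h_+(y)$, and in particular $g_q(\xi_t)/g_q(x) \to h_+(\xi_t)/h_+(x)$ pointwise on $\{t < T_{[a,b]}\}$.

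For (ii), the same decomposition, with $e^{-q\tau_{2k}}$ replaced by $1$ and $U_-^q$ by $U_-$, yields the clean bound $g_q(y) \leq \hat\kappa(q)\, h_+(y)$; combined with $g_q(x)/\hat\kappa(q) \to h_+(x) > 0$, this gives
\[ \frac{g_q(\xi_t)}{g_q(x)} \1_{\{t < T_{[a,b]}\}} \leq \frac{2\, h_+(\xi_t)}{h_+(x)} \1_{\{t < T_{[a,b]}\}} \]
for all sufficiently small $q>0$. The majorant is $\mP^x$-integrable thanks to the harmonicity of $h_+$ established in Theorem~\ref{thm_harmonic}, namely $\mE^x[\1_{\{t<T_{[a,b]}\}} h_+(\xi_t)] = h_+(x) < \infty$. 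Dominated convergence then identifies the limit as $\mE^x[\1_\Lambda \1_{\{t<T_{[a,b]}\}} h_+(\xi_t)/h_+(x)]$, which is the definition of $\mP_+^x(\Lambda)$. The main technical hurdle is step (i): assembling the Wiener--Hopf asymptotic for $G_q$ and interchanging the $q \searrow 0$ limit with both the infinite sum over $k$ and the integrals against the overshoot-type measures. It is fortunate that $h_+$ itself, whose existence has already been established, plays the role of the summable majorant that makes this double interchange legitimate.
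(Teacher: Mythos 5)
Your proposal is correct and follows essentially the same route as the paper: the reduction via the Markov property and memorylessness of $e_q$ to the ratio $g_q(\xi_t)/g_q(x)$, followed by dominated convergence with the majorant $h_+(\xi_t)/h_+(x)$ (integrable by harmonicity), is exactly the paper's argument. Your step (i) — the decomposition over the crossing times $\tau_{2k}$, the identity $\tau_1\wedge T_{[a,b]}=T_{(-\infty,b]}$ from above, and the asymptotic $g_q(y)\sim\hat\kappa(q)h_+(y)$ with the bound $g_q(y)\le\hat\kappa(q)h_+(y)$ — is precisely the content and proof of the paper's Proposition \ref{prop_asymp}, which the paper establishes separately and then cites here.
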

To understand the L\'evy process to avoid the interval without additional condition on the late values a natural guess is an $h$-transform with a linear combination of $h_+$ and $h_-$. Possible skewness of the L\'evy process implies that different weights must be chosen for $h_+$ and $h_-$. Our proofs show that the right harmonic function is
\begin{align}\label{h}
	h:=h_++C h_-,\quad \text{where}\quad C= \lim\limits_{q \searrow 0} \frac{\kappa(q)}{\hat\kappa(q)}.
\end{align}
Note that, $\hat{\kappa}(q)$ and $\kappa(q)$ behave like $\sqrt{q}$ for $q \searrow 0$ if $\xi$ oscillates and has finite variance, see for instance Patie and Savov \cite{Pat_Sav_01}, Remark 2.21. Hence, $C$ exists and is strictly positive and from Corollary \ref{thm_harmonic_h} it follows that $h$ is a positive harmonic function if we assume only $(A)$. The $h$-transform of $\xi$ killed in $[a,b]$ with $h$ from \eqref{h} will be denoted by $\mP_\updownarrow$. Our main result can now be formulated. Conditioning to avoid an interval is always possible for L\'evy processes with second moments and the conditioned law corresponds to the $h$-transform with $h$ from \eqref{h}.
\begin{theorem}\label{thm_cond}
Assume $(A)$. Then,
$$\mP_\updownarrow^x(\Lambda) = \lim_{q \searrow 0} \mP^x(\Lambda, t < e_q \, | \, e_q <T_{[a,b]}),\quad x \notin [a,b],$$
for $\Lambda \in \cF_t$, $t \geq 0$.
\end{theorem}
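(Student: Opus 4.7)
My plan is to split the conditioning event $\{e_q<T_{[a,b]}\}$ according to the side of the interval on which $\xi_{e_q}$ lies, and then to recombine the two pieces using the convergence already supplied by Proposition~\ref{thm_cond_pos}. Set
$$g_q^+(y) := \mP^y(e_q<T_{[a,b]},\,\xi_{e_q}>b), \qquad g_q^-(y) := \mP^y(e_q<T_{[a,b]},\,\xi_{e_q}<a),$$
so that $\mP^x(e_q<T_{[a,b]})=g_q^+(x)+g_q^-(x)$ and, splitting the numerator in the same way,
$$\mP^x(\Lambda,t<e_q\mid e_q<T_{[a,b]}) \;=\; \alpha_q(x)\,A_q^+(x) \;+\; \beta_q(x)\,A_q^-(x),$$
where $\alpha_q(x):=g_q^+(x)/(g_q^+(x)+g_q^-(x))$, $\beta_q(x):=1-\alpha_q(x)$, and $A_q^\pm(x)$ denote the corresponding conditional laws restricted to $\{\xi_{e_q}>b\}$ and $\{\xi_{e_q}<a\}$. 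By Proposition~\ref{thm_cond_pos} and its symmetric analogue (valid under $(A)$ and $(\hat B)$), $A_q^+(x)\to\mP_+^x(\Lambda)$ and $A_q^-(x)\to\mP_-^x(\Lambda)$ as $q\searrow 0$.

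All that remains is to identify the limits of the weights, which, since $\alpha_q+\beta_q=1$, reduces to computing $\lim_{q\searrow 0}g_q^-(x)/g_q^+(x)$. I would extract from the proof of Proposition~\ref{thm_cond_pos} the sharp one-sided asymptotics
$$g_q^+(y) \;=\; \hat\kappa(q)\,h_+(y)(1+o(1)), \qquad g_q^-(y) \;=\; \kappa(q)\,h_-(y)(1+o(1)), \quad q\searrow 0,$$
which are natural in view of the Wiener--Hopf picture: staying strictly above $b$ is governed by the descending ladder height process (hence by $\hat\kappa$), while staying strictly below $a$ is governed by the ascending one (hence by $\kappa$). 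The definition $C=\lim_{q\searrow 0}\kappa(q)/\hat\kappa(q)$ from \eqref{h} then yields $\beta_q(x)/\alpha_q(x)\to Ch_-(x)/h_+(x)$, so
$$\alpha_q(x)\longrightarrow \frac{h_+(x)}{h(x)}, \qquad \beta_q(x)\longrightarrow \frac{Ch_-(x)}{h(x)}.$$
Inserting this into the decomposition and invoking the identity
$$\mP_\updownarrow^x(\Lambda) \;=\; \frac{h_+(x)}{h(x)}\,\mP_+^x(\Lambda) \;+\; \frac{Ch_-(x)}{h(x)}\,\mP_-^x(\Lambda),$$
which follows from linearity of the $h$-transform in $h$, gives the theorem.

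The hard part, and the main obstacle, is the sharp asymptotic for $g_q^{\pm}$. Proposition~\ref{thm_cond_pos} already identifies the ratio $g_q^+(\xi_t)/g_q^+(x)$ in the limit, but pinning down the absolute normalisation $\hat\kappa(q)$ (respectively $\kappa(q)$) in front requires an independent scalar estimate; I would fix a reference point $y_0$, reduce to the single-point problem of determining $g_q^{\pm}(y_0)$ as $q\to 0$, and exploit the known small-$q$ behaviour $\kappa(q),\hat\kappa(q)\sim c\sqrt q$ for a zero-mean, finite-variance oscillating L\'evy process noted after \eqref{h}. Finally, the cases in which $(B)$ or $(\hat B)$ fails are handled directly: if, say, $(B)$ fails and $x<a$, then $g_q^+(x)=h_+(x)=0$, the decomposition collapses to the $\mP_-$-branch, and $\mP_\updownarrow^x=\mP_-^x$ holds automatically because $h=Ch_-$ on $(-\infty,a)$; the symmetric situation under failure of $(\hat B)$ is analogous.
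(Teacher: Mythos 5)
Your proposal is correct in structure, and it reaches the theorem by a genuinely different (if closely related) route. The paper does not pass through Proposition \ref{thm_cond_pos} at all: it first proves Corollary \ref{cor_asymp}, namely $\mP^x(e_q<T_{[a,b]})\le \hat\kappa(q)h_+^q(x)+\kappa(q)h_-^q(x)$ and $\lim_{q\searrow 0}\hat\kappa(q)^{-1}\mP^x(e_q<T_{[a,b]})=h_+(x)+Ch_-(x)$, and then repeats the integrate-out-$e_q$/Markov-property/dominated-convergence computation of Proposition \ref{thm_cond_pos} verbatim for the two-sided event, with the Corollary supplying both the limit and the dominating bound $h_+(y)+\beta h_-(y)$. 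Your convex-combination decomposition $\alpha_q A_q^+ + \beta_q A_q^-$ instead reuses Proposition \ref{thm_cond_pos} as a black box and only needs the scalar asymptotics of $g_q^\pm$ at the single point $x$; this is a legitimate shortcut, and the identity $\mP^x_\updownarrow=\frac{h_+(x)}{h(x)}\mP^x_+ +\frac{Ch_-(x)}{h(x)}\mP^x_-$ is immediate from \eqref{def_htrafo}. The key input you flag as the main obstacle, $g_q^+(y)=\hat\kappa(q)h_+(y)(1+o(1))$, is exactly Proposition \ref{prop_asymp} of the paper, so it is indeed available ``from the proof of Proposition \ref{thm_cond_pos}''; be aware, though, that it is not obtained from the crude behaviour $\kappa(q),\hat\kappa(q)\sim c\sqrt q$ alone, but from the identity $\hat\kappa(q)U_-^q(z)=\mP^z(e_q<T_{(-\infty,0]})$ together with the decomposition of $\{e_q<T_{[a,b]},\xi_{e_q}>b\}$ over the intervals $[\tilde\tau_{2k},\tilde\tau_{2k+1})$ between successive jumps across $[a,b]$ and a sandwich bound with ratio $\mP^x(e_q\ge T_{[a,b]})^{-1}\to 1$. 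The price of your route is that it needs $(B)$ and $(\hat B)$ (via Proposition \ref{thm_cond_pos} and its dual) and therefore requires the separate treatment of the degenerate cases that you sketch at the end; the paper's direct argument works under $(A)$ alone and avoids this case distinction.
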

Typically the first property analyzed for a conditioned process is the longtime behavior. It is often the case that the conditioning turns a recurrent process into a transient process. Nonetheless, a priori it is completely unclear what the limit behavior under $\mP_\pm$ and in particular $\mP_\updownarrow$ is. Processes might be oscillating, diverge to $+\infty$ or $-\infty$, or might even diverge to both infinities with positive probability. The next proposition covers the case $\mP_+$:
\begin{proposition}\label{thm_drift+}
Assume $(A)$ and $(B)$. Then $\mP_{+}^x(\lim\limits_{t \rightarrow \infty} \xi_t = +\infty)=1$ for all $x \notin [a,b].$
\end{proposition}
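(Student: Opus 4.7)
The plan is to analyse the trajectory under $\mP_+^x$ via the sequence of crossing times $(\tau_k)$, showing in turn: (a) there are $\mP_+^x$-almost surely only finitely many finite $\tau_k$; (b) after the last finite crossing the process lies in $(b,\infty)$; and (c) once trapped in $(b,\infty)$, the process coincides with the classical Chaumont--Doney process ``conditioned to stay above $b$'', which is known to drift to $+\infty$.

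For (a), the $h$-transform identity \eqref{eq_htrafo_stoppingtime} applied at the stopping time $\tau_k$ gives $\mP_+^x(\tau_k < \infty) = h_+(x)^{-1}\int h_+(y)\,\nu_k^x(\dd y)$. Using the strong Markov property at $\tau_m$ and the very definition of $h_+$ as an iterated sum over overshoot measures, one rewrites (say for $x>b$) $\int h_+(y)\,\nu_{2m}^x(\dd y)$ as the tail from index $m$ of the convergent series defining $h_+(x)$, and analogously in the other parity and for $x<a$. Summability forces these tails to zero, hence $K^\dagger<\infty$ $\mP_+^x$-a.s. Since $T_{[a,b]}=\infty$ under $\mP_+^x$ and any continuous crossing of $[a,b]$ would enter the interval, the process eventually stays in exactly one of $(b,\infty)$ or $(-\infty,a)$. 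For (b), the same series manipulation yields $\int h_+(y)\,\nu_1^x(\dd y)=h_+(x)$ for every $x<a$, so that $\mP_+^x(\tau_1<\infty)=1$ whenever $x<a$. By strong Markov at each $\tau_n$, the event ``$\xi_{\tau_n}<a$ and $\tau_{n+1}=\infty$'' has $\mP_+^x$-probability zero, so the last crossing leaves $\xi$ in $(b,\infty)$.

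For (c), applying \eqref{eq_htrafo_stoppingtime} at $T_{(-\infty,b]}$ together with the identity $\int h_+(z)\,\nu_1^y(\dd z)=h_+(y)-U_-(y-b)$ (valid for $y>b$) yields $\mP_+^y(T_{(-\infty,b]}=\infty)=U_-(y-b)/h_+(y)$. A direct computation using the Markov property at a deterministic time $t$ and the definition of $\mP_+$ then shows that $\mP_+^y(\,\cdot\mid T_{(-\infty,b]}=\infty)$ coincides with the Doob $h$-transform of $\xi$ killed on entering $(-\infty,b]$ by the positive harmonic function $U_-(\cdot-b)$, i.e.\ the Chaumont--Doney process conditioned to stay above $b$, which drifts to $+\infty$. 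Decomposing $\{\lim_t\xi_t=+\infty\}$ according to the index of the last finite crossing and applying the strong Markov property at $\tau_n$ reduces the claim to (a)-(c). The main obstacle is step (c): the event $\{T_{(-\infty,b]}=\infty\}$ lies in no $\cF_t$, so its probability and conditional law must be extracted via the Markov decomposition at a deterministic time before the computation becomes algebraic; everything else amounts to bookkeeping with the series defining $h_+$.
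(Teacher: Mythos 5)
Your argument is correct, and while its first half shadows the paper's, its conclusion takes a genuinely different route. Steps (a) and (b) are in essence the paper's Step~1 reorganised: the identity $\int_{(b,\infty)}h_+(y)\,\nu_1^x(\dd y)=h_+(x)$ for $x<a$ is exactly the opening computation of the paper's proof, and your tail-of-the-series argument for $\mP_+^x(\tau_n<\infty)\to 0$ (via \eqref{eq_htrafo_stoppingtime} at $\tau_n$ and the convolution relations \eqref{cc}) is a clean substitute for the paper's dominated-convergence argument over the events $\{T_{(-\infty,c]}\in[\tau_{2k},\tau_{2k+1})\}$ as $c\to-\infty$; both deliver that the process eventually settles in $(b,\infty)$. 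The real divergence is in the final step. The paper proves transience directly, bounding the expected occupation of $[d,a)\cup(b,c]$ under $\mP_+^x$ by $U_+(c-b)h_+(x)+U_-(a-d)h_-(x)$ via the two-sided exit potential estimate from Bertoin (Proposition VI.20), and then concludes from ``bounded below + transient + conservative.'' You instead compute $\mP_+^y(T_{(-\infty,b]}=\infty)=U_-(y-b)/h_+(y)$ from the identity $\int_{(-\infty,a)}h_+(z)\,\nu_1^y(\dd z)=h_+(y)-U_-(y-b)$, identify $\mP_+^y(\,\cdot\mid T_{(-\infty,b]}=\infty)$ with the Chaumont--Doney $h$-transform by $U_-(\cdot-b)$ (your handling of the fact that $\{T_{(-\infty,b]}=\infty\}\notin\cF_t$ via the Markov property at a deterministic time is the right move), and import the known fact that for an oscillating Lévy process the conditioned-to-stay-positive process drifts to $+\infty$. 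This is shorter and conceptually appealing, but note two costs: it leans on an external result (Chaumont--Doney) that the paper does not need here, and it does not produce the occupation-time estimate, which the paper reuses verbatim to establish the transience claim in Theorem \ref{thm_drifth}; with your route that estimate would still have to be proved separately there.
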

Analogously, assuming $(A)$ and $(\hat B)$ one can show that $\xi$ drifts to $-\infty$ almost surely under $\mP^x_{-}$. It remains to consider the behaviour of $(\xi,\mP^x_\updownarrow)$. Our final theorem shows that L\'evy processes with second moments conditioned to avoid an interval drift to $+\infty$ and $-\infty$ with (explicit) positive probabilities:
\begin{theorem}\label{thm_drifth}
Assume $(A)$. Then, $\mP^x_\updownarrow$ is transient in the sense that
$$\mE^x_\updownarrow \Big[\int\limits_{[0,\infty)} \1_{\{ \xi_t \in K \}} \, \dd t \Big] <\infty, \quad x \notin [a,b],$$
for all bounded $K \subseteq \mR\setminus [a,b]$. More precisely, 
$$\mP^x_\updownarrow\big(\lim\limits_{t \rightarrow \infty} \xi_t = +\infty\big) = \frac{h_+(x)}{h(x)} \qquad \text{and} \qquad \mP^x_\updownarrow\big(\lim\limits_{t \rightarrow \infty} \xi_t = -\infty\big) = \frac{Ch_-(x)}{h(x)}, \quad x \notin [a,b],$$
so that, in particular, $\mP^x_\updownarrow$-almost surely trajectories do not oscillate.
\end{theorem}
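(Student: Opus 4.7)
I would start from the observation that since $h = h_+ + C h_-$ and the density in \eqref{def_htrafo} is linear in the harmonic function, the $h$-transform law $\mP^x_\updownarrow$ factors as a convex combination of $\mP^x_+$ and $\mP^x_-$: for every $t\ge 0$ and $\Lambda\in\cF_t$,
\begin{equation*}
\mP^x_\updownarrow(\Lambda) \;=\; \frac{h_+(x)}{h(x)}\,\mP^x_+(\Lambda) \;+\; \frac{Ch_-(x)}{h(x)}\,\mP^x_-(\Lambda).
\end{equation*}
The mixing weights $\alpha:=h_+(x)/h(x)$ and $\beta:=Ch_-(x)/h(x)$ do not depend on $t$ and satisfy $\alpha+\beta=1$, so both sides of $\mP_\updownarrow^x = \alpha\mP_+^x + \beta\mP_-^x$ are probability measures agreeing on the algebra $\bigcup_{t\ge 0}\cF_t$; a standard Dynkin $\pi$--$\lambda$ argument then extends the identity to $\cF_\infty = \sigma\bigl(\bigcup_{t\ge 0}\cF_t\bigr)$, in particular to tail events such as $\{\lim_{t\to\infty}\xi_t=\pm\infty\}$.

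Plugging those tail events into the extended mixture decomposition, Proposition \ref{thm_drift+} and its analog under $(\hat B)$ give $\mP_+^x(\lim\xi_t=+\infty)=1$ and $\mP_-^x(\lim\xi_t=-\infty)=1$, from which the claimed probabilities $\mP_\updownarrow^x(\lim\xi_t=+\infty) = h_+(x)/h(x)$ and $\mP_\updownarrow^x(\lim\xi_t=-\infty) = Ch_-(x)/h(x)$ follow immediately. Their sum equals $1$, so $\mP^x_\updownarrow$-almost every trajectory has a definite limit at $\pm\infty$ and non-oscillation is automatic. If one of $(B)$, $(\hat B)$ fails, the corresponding $h_\pm$ vanishes on one side of $[a,b]$ and the impossible drift direction has probability $0$ under the associated $h$-transform, so the formulas remain valid.

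For transience, the same decomposition reduces matters to showing $\mE^x_\pm\big[\int_0^\infty\1_{\{\xi_t\in K\}}\,\dd t\big]<\infty$ for bounded $K\subseteq\mR\setminus[a,b]$. Applying the $h$-transform identity together with Tonelli yields
\begin{equation*}
\mE^x_\pm\Big[\int_0^\infty\1_{\{\xi_t\in K\}}\,\dd t\Big] \;=\; \frac{1}{h_\pm(x)}\int_K h_\pm(y)\,U^{[a,b]}(x,\dd y),
\end{equation*}
where $U^{[a,b]}(x,\cdot)=\int_0^\infty p^{[a,b]}_t(x,\cdot)\,\dd t$ is the potential measure of $\xi$ killed on hitting $[a,b]$. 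Finiteness then reduces to local boundedness of $h_\pm$ on $\mR\setminus[a,b]$ (a consequence of continuity of $U_\pm$ and the bound $\nu^x_k(\mR\setminus[a,b])\le 1$ in the series definition of $h_\pm$) and to local finiteness of $U^{[a,b]}(x,\cdot)$ on $\mR\setminus[a,b]$ (standard for L\'evy processes satisfying $(A)$, e.g.\ by comparison with the Green kernel of $\xi$ killed on exiting a sufficiently large bounded interval containing both $\{x\}$ and $K$). The main obstacle I anticipate is precisely this last step, particularly in controlling $U^{[a,b]}(x,\cdot)$ near the boundary of $[a,b]$: for a recurrent L\'evy process the unkilled potential is infinite, so one must genuinely exploit the interval killing, and simultaneously check that the product $h_\pm \cdot U^{[a,b]}(x,\cdot)$ does not blow up near $a$ or $b$. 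The drift and non-oscillation statements, by contrast, are almost immediate once the mixture decomposition is in hand.
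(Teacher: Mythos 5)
Your mixture decomposition $\mP^x_\updownarrow=\frac{h_+(x)}{h(x)}\mP^x_++\frac{Ch_-(x)}{h(x)}\mP^x_-$, extended to $\cF_\infty$ by a $\pi$--$\lambda$ argument, is correct and gives the drift probabilities cleanly from Proposition \ref{thm_drift+} and its dual; this is a slightly more abstract packaging of what the paper does (the paper computes $\mP^x_\updownarrow(T_{(-\infty,c]}<\infty)$ directly, splits $h=h_++Ch_-$ inside the expectation, and shows the $h_+$-term vanishes as $c\to-\infty$ while the $h_-$-term is constant equal to $Ch_-(x)$). One small point you gloss over: when $(\hat B)$ fails you still need $\mP^x_-(\lim_t\xi_t=-\infty)=1$ for $x<a$, which does not follow from "the impossible direction has probability $0$"; the paper gets it by identifying $\mP^x_-$ in that case with the Chaumont--Doney process conditioned to stay below $a$.

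The genuine gap is in the transience claim. You correctly reduce it to $U^{[a,b]}(x,K)<\infty$ for bounded $K\subseteq\mR\setminus[a,b]$, flag it as the main obstacle, and then propose to settle it "by comparison with the Green kernel of $\xi$ killed on exiting a sufficiently large bounded interval containing $\{x\}$ and $K$". That comparison fails: killing at the exit time of a large interval $[-M,M]$ and killing at $T_{[a,b]}$ are not ordered, and under $(A)$ the process typically exits $[-M,M]$ and returns to $K$ many times before hitting $[a,b]$, so the Green kernel of the bounded-interval problem does not dominate $U^{[a,b]}(x,\cdot)$. Finiteness of $U^{[a,b]}(x,K)$ is exactly the nontrivial content here, and the paper proves it by decomposing the occupation time according to the successive crossing times $\tau_k$,
\begin{equation*}
U^{[a,b]}(x,[d,a)\cup(b,c])=\sum_{k=0}^\infty\Big(U^{(-\infty,b]}\big(\nu^x_{2k},(b,c]\big)+U^{[a,\infty)}\big(\nu^x_{2k+1},[d,a)\big)\Big),
\end{equation*}
and then invoking Proposition VI.20 of Bertoin to get $U^{(-\infty,b]}(y,(b,c])\le U_+(c-b)\,U_-(y-b)$ (and its dual), so that the series is bounded by $U_+(c-b)h_+(x)+U_-(a-d)h_-(x)$, finite by Proposition \ref{lemma_finite}. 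Without this (or an equivalent) argument the first assertion of the theorem is unproved. Your worry about the product $h_\pm\cdot U^{[a,b]}(x,\cdot)$ blowing up near $a$ or $b$ is, by contrast, a non-issue, since $h_\pm$ is bounded on bounded subsets of $\mR\setminus[a,b]$ up to the boundary.
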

In the recent article \cite{Doer_Kyp_Wei_01} it was proved that stable processes conditioned to avoid an interval are transient. Since stable processes have infinite second moments our new results do not apply and it remains unclear if trajectories oscillate or diverge to $+\infty$ and $-\infty$ with positive probabilities. One can even use explicit formulas for the potential functions and the overshoot distributions (see e.g. Rogozin, \cite{Rog_01}) to show that in the stable case $h_+(x)$ and $h_-(x)$ are infinite for all $x \notin [a,b]$.

\section{An explicit example}\label{sec:3}

When $\xi$ is a Lévy process with no drift and two-sided exponential jumps, 
it is possible to compute the harmonic functions $h_+$, $h_-$ and $h$ explicitly. Let
\begin{align}\label{example_model}
	\xi_t = \sigma B_t + \sum_{i=1}^{N_t}Y_i,\quad t \geq 0,
\end{align}
where $\sigma \geq 0$, $(B_t)_{t \geq 0}$ is a standard Brownian motion,
and $\sum_{i=1}^{N_t}Y_i$ is a compound Poisson process with rate $\lambda>0$ and absolutely continuous jump distribution with density
$$f_Y(y) = \frac{1}{2} e^{-\eta y} \1_{\{y>0\}} + \frac{1}{2} e^{-\eta (-y)} \1_{\{y<0\}}.$$
For definiteness, let $\sigma=\sqrt{2}$ and $\lambda=1$. The Laplace exponent $\psi$ of $\xi$,
given by $\mE[e^{-\theta \xi_t}] = e^{-t\psi(\theta)}$, can be expressed,
for $\theta \in (-\eta,\eta)$, by
\begin{align}\label{ddd}
  \psi(\theta) 
  &= -\theta^2 - \frac{\theta^2}{(\eta-\theta)(\eta+\theta)}
  = \frac{\theta(\beta+\theta)}{\eta+\theta} \cdot \frac{(-\theta)(\beta-\theta)}{\eta-\theta}.
\end{align}
where $\beta = \sqrt{\eta^2+1} > \eta$.
Note that $\xi$ oscillates and has finite variance, so $(A)$ holds, $(B)$ and $(\hat B)$ both hold as well. Let
\[ \upsilon(\theta) = \hat\upsilon(\theta) = \frac{\theta(\beta+\theta)}{\eta+\theta}
  = \theta + (\beta-\eta) \int_0^\infty (1-e^{-\theta x})\eta e^{-\eta x}\, \mathrm{d} x,
  \qquad \theta > -\eta,
\]
which is the Laplace exponent of a subordinator with unit drift, jump rate $\beta-\eta$
and exponential jumps of parameter $\eta$. Since
\[
  \psi(\theta) = \upsilon(\theta)\hat\upsilon(-\theta),
\]
the uniqueness of the Wiener--Hopf factorisation \cite[Theorem 6.15(iv)]{Kyp_01}
implies that $\upsilon$ and $\hat\upsilon$ are indeed the Laplace exponents of the
ascending and descending ladder height subordinators, respectively.

Since
\begin{align}
\int\limits_{[0,\infty)} e^{-\theta x} \, U_-(\dd x)=\int\limits_{[0,\infty)} e^{-\theta x} \, U_+(\dd x) = \frac{1}{\psi_+(\theta)} = \frac{\eta+\theta}{\theta(\beta+\theta)}
\end{align}
by \cite[equation (5.23)]{Kyp_01},
we can identify the potential measures
\begin{align*}
U_-(\dd x) = U_+(\dd x) = \Big(\frac{\eta}{\beta} + \frac{\beta-\eta}{\beta} e^{-\beta x} \Big) \, \dd x.
\end{align*}
and the potential functions
\begin{align} \label{example_U}
U_-(x)= U_+(x) = \frac{\eta}{\beta}x + \frac{\beta-\eta}{\beta^2}(1-e^{-\beta x}), \quad x \geq 0.
\end{align}

\smallskip

To find $h_+$ in closed form we first need to find the measures $\nu_k^x$ explicitly.
This can in principle be done using the expressions we have just found for $U_\pm$ 
and the Lévy measures of the ladder height subordinators, but in fact
the overshoot distributions have already been found in
Kou and Wang \cite{Kou_Wan_01}, Corollary 3.1, where
$$\mP^x(\xi_{T_{[a,\infty)}} \in \dd y) = \frac{\eta(\beta-\eta)}{\beta}(1-e^{-\beta(a-x)}) e^{-\eta(y-a)}, \quad x<a<y,$$
and
$$\mP^x(\xi_{T_{(-\infty,b]}} \in \dd y) = \frac{\eta(\beta-\eta)}{\beta}(1-e^{-\beta(x-b)}) e^{-\eta(b-y)}, \quad x>b>y,$$
are proven.

We now claim that
\begin{align}\label{claim1}
\nu^x_{2k+1}(\dd y) = c^{2k} \nu_1^x(\dd y),\quad x<a, y>b,
\end{align}
and
\begin{align}\label{claim2}
\nu^x_{2k+2}(\dd y) = c^{2k} \nu_2^x(\dd y), \quad x,y>b,
\end{align}
hold for all $k\ge 0$, 
where $c = e^{-\eta(b-a)}(\beta-\eta)/(\beta+\eta)$. For proving this, note that
\begin{align*}
\int\limits_{(b,\infty)} (1-e^{-\beta(z-b)})e^{-\eta(z-a)} \, \dd z &= \int\limits_{(-\infty,a)} (1-e^{-\beta(a-z)})e^{-\eta(b-z)} \, \dd z\\
 &= e^{-\eta(b-a)} \frac{\beta}{\eta(\beta+\eta)}.
\end{align*}
For $k=0$ the claims are clearly correct. Next, note that for $x>b$:
\begin{align*}
\nu^x_2(\dd y) &= \int\limits_{(-\infty,a)} \mP^z(\xi_{T_{[a,\infty)}} \in \dd y) \, \mP^x(\xi_{T_{(-\infty,b]}} \in \dd z) \\
&=\Big(\frac{\eta(\beta-\eta)}{\beta}\Big)^2 (1-e^{-\beta(x-b)})e^{-\eta(y-a)} \int\limits_{(-\infty,a)}  (1-e^{-\beta(a-z)}) e^{-\eta(b-z)} \dd z \\
&=\Big(\frac{\eta(\beta-\eta)}{\beta}\Big)^2 (1-e^{-\beta(x-b)})e^{-\eta(y-a)} e^{-\eta(b-a)} \frac{\beta}{\eta(\beta+\eta)}\\
&= c \frac{\eta(\beta-\eta)}{\beta} (1-e^{-\beta(x-b)})e^{-\eta(y-a)}.
\end{align*}

\smallskip

Now, let us assume the claims are correct for $k-1$, $k \geq 1$. Then, for $x<a,b<y$,
\begin{align*}
\nu^x_{2k+1}(\dd y) &= \int\limits_{(b,\infty)} \nu^z_{2k}(\dd y)\, \mP^x(\xi_{T_{[a,\infty)}} \in \dd z) \\
&= c^{2k-2} \int\limits_{(b,\infty)} \nu^z_{2}(\dd y) \,\mP^x(\xi_{T_{[a,\infty)}} \in \dd z)\\
&= c^{2k-2} c \Big(\frac{\eta(\beta-\eta)}{\beta}\Big)^2 (1-e^{-\beta(a-x)}) e^{-\eta(y-a)} \int\limits_{(b,\infty)}(1-e^{-\beta(z-b)})  e^{-\eta(z-a)} \, \dd z \\
&= c^{2k-1} \Big(\frac{\eta(\beta-\eta)}{\beta}\Big)^2 e^{-\eta(b-a)} \frac{\beta}{\eta(\beta+\eta)} (1-e^{-\beta(a-x)}) e^{-\eta(y-a)} \\
&= c^{2k-1}  \Big(\frac{\beta-\eta}{\beta+\eta}\Big)e^{-\eta(b-a)} \mP^x(\xi_{T_{[a,\infty)}} \in \dd y) \\
&= c^{2k} \nu_1^x(\dd y),
\end{align*}
which is \eqref{claim1}. Similarly we get, for $x,y>b$,
\begin{align*}
\nu^x_{2k+2}(\dd y) &= \int\limits_{(-\infty,a)} \nu^z_{2k+1}(\dd y)\, \mP^x(\xi_{T_{(-\infty,b]}} \in \dd z) \\
&= c^{2k} \int\limits_{(-\infty,a)}  \nu_1^z(\dd y)\, \mP^x(\xi_{T_{(-\infty,b]}} \in \dd z)\\
&= c^{2k} \int\limits_{(-\infty,a)}  \nu_1^z(\dd y)\, \nu_1^x(\dd z)\\
&= c^{2k}  \nu_2^x(\dd y)
\end{align*}
which is \eqref{claim2}.\smallskip

Having formulas for $U_-$ and all $\nu_k$ we can proceed to compute $h_+$. Combining \eqref{example_U}, \eqref{claim1} and \eqref{claim2} standard integration shows, for $k \geq 1$,
\begin{align*}
\int\limits_{(b,\infty)} U_-(y-b) \, \nu_{2k+1}^x(\dd y) &= c^{2k} \int\limits_{(b,\infty)} U_-(y-b) \, \nu_{1}^x(\dd y)\\
&= c^{2k} \frac{2c}{\beta} (1-e^{\-\beta(a-x)})\\
&= \frac{2c^{2k+1}}{\beta} (1-e^{\-\beta(a-x)})
\end{align*}
for $x<a$ and
\begin{align*}
\int\limits_{(b,\infty)} U_-(y-b) \, \nu_{2k+2}^x(\dd y) &= c^{2k} \int\limits_{(b,\infty)} U_-(y-b) \, \nu_{2}^x(\dd y)\\
&= c^{2k} \frac{2c^2}{\beta} (1-e^{-\beta(x-b)})\\
&= \frac{2c^{2k+2}}{\beta} (1-e^{-\beta(x-b)})
\end{align*}
for $x>b$. Hence, plugging-into the definition of $h_+$ gives
\begin{align*}
h_+(x) &= \Big(\sum\limits_{k=0}^\infty c^{2k+1} \Big) \frac{2}{\beta} (1-e^{-\beta(a-x)})= \frac{2c}{\beta(1-c^2)} (1-e^{-\beta(a-x)})
\end{align*}
for $x<a$ and 
\begin{align*}
h_+(x) &= \frac{\eta}{\beta}(x-b) + \frac{\beta-\eta}{\beta^2}(1-e^{-\beta (x-b)}) + \Big(\sum\limits_{k=0}^\infty c^{2k+2} \Big) \frac{2}{\beta} (1-e^{-\beta(x-b)})\\
&= \frac{\eta}{\beta}(x-b) + \frac{\beta-\eta}{\beta^2}(1-e^{-\beta (x-b)}) + \frac{2c^2}{\beta(1-c^2)} (1-e^{-\beta(x-b)})\\
&= \frac{\eta}{\beta}(x-b) + \Big(\frac{\beta-\eta}{\beta^2} + \frac{2c^2}{\beta(1-c^2)} \Big)(1-e^{-\beta(x-b)})
\end{align*}
for $x>b$. Analogously we obtain 
\begin{align*}
	h_-(x)=
	\begin{cases}
	 \frac{2c}{\beta(1-c^2)} (1-e^{-\beta(x-b)})	&\text{if } x >b \\
	 \frac{\eta}{\beta}(a-x) + \Big(\frac{\beta-\eta}{\beta^2} + \frac{2c^2}{\beta(1-c^2)} \Big)(1-e^{-		\beta(a-x)})	&\text{if } x <a \\
	\end{cases}
\end{align*}
and, finally, 
\begin{align*}
	h(x)= 
	\begin{cases}
	  \frac{\eta}{\beta}(x-b) + \Big(\frac{\beta-\eta}{\beta^2} + \frac{2(c+c^2)}{\beta(1-c^2)} \Big) (1-e^{-\beta(x-b)})	&\text{if } x >b \\
	 \frac{\eta}{\beta}(a-x) + \Big(\frac{\beta-\eta}{\beta^2} + \frac{2(c+c^2)}{\beta(1-c^2)} \Big)(1-e^{-		\beta(a-x)})	&\text{if } x <a \\
	\end{cases}.
\end{align*}
We used that by symmetry $\kappa = \hat\kappa$ and consequently $C=\lim_{q \searrow 0} \kappa(q)/\hat\kappa(q)=1$.

\section{Proofs}\label{sec_proofs}
Before going into the proofs let us discuss the form of the measures $\nu_k$ defined before. We assume in the theorems that $\xi$ oscillates, hence, all appearing first hitting times are almost surely finite. Keeping in mind that on the event $\{K^\dag > k\}$ the time $\tau_k$ is the time of the $k^{th}$ jump across the interval. By the strong Markov property and $\nu_0^x(\dd y) = \delta_x(\dd y)$, we find the relations 
\begin{align*}
\nu_{2k+1}^x(\dd y) &= \int\limits_{(b,\infty)} \mP^z(\xi_{T_{(-\infty,b]}} \in \dd y) \, \nu^x_{2k}(\dd z)=\int\limits_{(b,\infty)} \, \nu_1^z(\dd y) \, \nu_{2k}^x(\dd z), \\
\nu_{2k}^x(\dd y) &= \int\limits_{(-\infty,a)} \mP^z(\xi_{T_{[a,\infty)}} \in \dd y) \, \nu^x_{2k-1}(\dd z)=\int\limits_{(-\infty,a)} \, \nu_1^z(\dd y) \, \nu_{2k-1}^x(\dd z),
\end{align*}
for $x>b$, and
\begin{align*}
\nu_{2k+1}^x(\dd y) &= \int\limits_{(-\infty,a)} \mP^z(\xi_{T_{[a,\infty)}} \in \dd y) \, \nu^x_{2k}(\dd z)=\int\limits_{(-\infty,a)} \, \nu_1^z(\dd y) \, \nu_{2k}^x(\dd z),\\
\nu_{2k}^x(\dd y) &= \int\limits_{(b,\infty)} \mP^z(\xi_{T_{(-\infty,b]}} \in \dd y) \, \nu^x_{2k-1}(\dd z)=\int\limits_{(b,\infty)} \, \nu_1^z(\dd y) \, \nu_{2k-1}^x(\dd z),
\end{align*}
for $x<a$. More generally, the strong Markov property also implies the relation
\begin{align}\label{cc}
\int\limits_{(b,\infty)} \, \nu_l^z(\dd y) \, \nu_{2k}^x(\dd z) = \nu_{2k+l}^x(\dd y) \qquad \text{and} \qquad
\int\limits_{(-\infty,a)} \, \nu_l^z(\dd y) \, \nu_{2k+1}^x(\dd z) = \nu_{2k+l+1}^x(\dd y)
\end{align}
for $x>b$ and $k,l \in \mN$ and the analogous identities hold for $x<a$. It is important to note that (see e.g. Bertoin \cite{Bert_01}, Proposition III.2) analytic formulas exist for the overshoot distributions:
\begin{align}\label{overshoot}
\mP^x(\xi_{T_{[a,\infty)}} \in \dd y) = \int\limits_{[x,a]} \, \mu_+(\dd y - u) \, U_+(\dd u-x), \quad x<a<y,
\end{align}
and, analogously,
\begin{align}\label{overshootdown}
\mP^x(\xi_{T_{(-\infty,b]}} \in \dd y) = \int\limits_{[b,x]} \, \mu_-(u-\dd y) \, U_-(x-\dd u), \quad x>b>y.
\end{align}
Hence, analytic expressions for the $\nu_k$ exist in the oscillating case even though these become more involved for big $k$ due to the recursive definition. As an example, for $x>b$, we have
\begin{align*}
\nu_2^x(\dd y) &= \int\limits_{(-\infty,a)} \, \mP^z(\xi_{T_{[a,\infty)}} \in \dd y) \, \mP^x(\xi_{T_{(-\infty,b]}} \in \dd z)\\
&= \int\limits_{(-\infty,a)} \Big[ \int\limits_{[b,x]} \Big( \int\limits_{[x,a]} \, \mu_+(\dd y - u) \, U_+(\dd u-x) \Big) \, \mu_-(w-\dd z) \Big] \,U_-(x-\dd w).
\end{align*}

\subsection{Finiteness of the harmonic function}\label{sec_finite}
Since $h_+$ and $h_-$ are defined by infinite series finiteness has to be proved. Along the way we deduce upper bounds that are needed in the sections below.\smallskip

Note that Assumption $(A)$ implies that $\mE\la H_1 \ra$ and $\hat{\mE}\la H_1 \ra$ are finite and this will be crucial for the technical steps which are necessary to prove the following. 
\begin{proposition}\label{lemma_finite}
Assume $(A)$, then there are constants $c_1,c_2,c_3 \geq 0$ such that
$$h_+(x) \leq c_1 U_-(x-b) \1_{\{ x>b \}} + c_2 U_+(a-x) \1_{\{ x<a \}} + c_3, \quad x \notin [a,b],$$
in particular $h_+(x)$ is finite for all $x \in \mR \setminus [a,b]$.
\end{proposition}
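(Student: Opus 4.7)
The aim is to dominate $h_+$ by $U_-(x-b)\1_{\{x>b\}}+U_+(a-x)\1_{\{x<a\}}$ up to a uniform additive constant. The approach combines a global linear growth bound on $U_\pm$ with a contraction argument for the iterated overshoot kernels. Under Assumption~$(A)$ both ladder height subordinators have finite mean, and the elementary renewal theorem provides
\[ U_+(y)\ \le\ C_0(1+y) \qquad \text{and} \qquad U_-(y)\ \le\ C_0(1+y),\qquad y\ge 0, \]
for some $C_0>0$. This is essentially the only analytic input the argument needs.

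The first substantive step is a one-crossing estimate. Starting from the explicit overshoot formula \eqref{overshoot} and inserting the linear bound on $U_-$, I would split $\int_{(b,\infty)} U_-(y-b)\,\nu_1^z(\dd y)$ into a contribution proportional to $U_+(a-z)$ (the mass of $\mu_+$ close to the interval, weighted by $U_+$ on $[0,a-z]$) and a tail piece controlled by the first moment of $\mu_+$. Thanks to $(A)$ the ladder Lévy measures have finite first moment, and one obtains
\[ \int_{(b,\infty)} U_-(y-b)\,\nu_1^z(\dd y)\ \le\ K_1\, U_+(a-z)+K_2,\qquad z<a, \]
together with the symmetric bound $\int_{(-\infty,a)} U_+(a-y)\,\nu_1^z(\dd y)\le K_1\, U_-(z-b)+K_2$ for $z>b$.

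Iterating these two bounds through the Markov recursion \eqref{cc} produces compositions of the two one-step kernels. The plan is to upgrade the pair of bounds into a genuine contraction, i.e.\ a constant $\rho\in(0,1)$ such that, for $x>b$ and $k\ge 1$,
\[ \int_{(b,\infty)} U_-(y-b)\,\nu_{2k}^x(\dd y)\ \le\ \rho^k\bigl(c_1 U_-(x-b)+c_2\bigr), \]
with a parallel inequality bounding $\int_{(b,\infty)}U_-(y-b)\,\nu_{2k+1}^x(\dd y)$ by $\rho^k(c_1' U_+(a-x)+c_2')$ for $x<a$. Summing the resulting geometric series (the $k=0$ terms contribute the respective leading $U_-(x-b)$ or $U_+(a-x)$-piece) then yields the asserted inequality.

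The main obstacle is establishing the strict contraction $\rho<1$. The bare martingale identities for $\xi$ and for the killed harmonics $U_\pm$ only deliver $\rho\le 1$, and the sharp inequality must quantify that two successive crossings of $[a,b]$ either kill the process inside $[a,b]$ or at least dissipate a definite fraction of the ``$U_-$-mass''. Under~$(A)$ this should follow from the oscillation of $\xi$ combined with the finiteness of $\mE[H_1]$ and $\hat\mE[H_1]$, but turning this intuition into a uniform quantitative estimate (uniform in the starting point $z\notin[a,b]$) is the core technical difficulty of the proposition.
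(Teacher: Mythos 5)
Your plan correctly identifies the overall architecture — a one-crossing estimate of the form $\int_{(b,\infty)} U_-(y-b)\,\nu_1^z(\dd y)\le K_1 U_+(a-z)+K_2$, iterated through \eqref{cc} and summed as a geometric series — and this is indeed the skeleton of the paper's proof. However, you explicitly leave open the step on which everything hinges ("the core technical difficulty"), namely why one may take a multiplicative coefficient $\rho<1$, and without that the series need not converge. So the proposal has a genuine gap rather than being a complete alternative proof.

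The paper closes this gap not by proving a single strict contraction $\rho<1$, but by decoupling the estimate into two separate geometric mechanisms. First (Lemma \ref{lemma_help_2}), for \emph{every} $\alpha\in(0,1)$ one has
$\mE^x[U_+(a-\xi_{T_{(-\infty,b]}})\1_{\{\xi_{T_{(-\infty,b]}}<a\}}]\le \alpha\,U_-(x-b)+C_+(\alpha)$:
the overshoot is split at a level $K=K(\alpha)$; the near part contributes only the constant $U_+(K)$, while the far part is controlled by $\int_{(K,\infty)}U_+(y)\,\mu_-(\dd y)$, which tends to $0$ as $K\to\infty$ because $U_+(y)\le c_K y$ and $\hat\mE[H_1]=\int y\,\mu_-(\dd y)+a_-<\infty$. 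So the multiplicative coefficient is made arbitrarily small at the price of a large additive constant — no strict-contraction argument is needed. Second (Lemma \ref{lemma_help_1}), the additive constants would accumulate as $C(\alpha)\sum_k \nu_k^x(\cdot)$, and this is controlled by the purely probabilistic fact that $\gamma_+:=\sup_{x<a}\mP^x(\xi_{T_{[a,\infty)}}>b,\,T_{[a,\infty)}<\infty)<1$ (and its dual), whence $\nu^x_k$ has total mass at most $\gamma^{k-1}$. Proving $\gamma_+<1$ uniformly in the starting point is itself delicate: it uses the stationary overshoot distribution \eqref{d} (requiring $\mE[H_1]<\infty$) for $x$ near $-\infty$, upward regularity or its failure near $a$, and Lemma \ref{lemma_osc} in between. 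Your proposal does not address this uniform mass-decay estimate at all, yet it is indispensable: even with a one-step bound $\alpha U+C$, iterating $2k$ times produces a sum $C(\alpha)\sum_{i}\gamma^i\alpha^{2k-1-i}$ whose summability over $k$ requires $\gamma<1$. In short: replace the search for a single $\rho<1$ by (i) an $\alpha$-small multiplicative coefficient obtained from the tail of the ladder Lévy measure, and (ii) a separate uniform bound $\gamma<1$ on the probability of jumping clear over $[a,b]$; these are exactly the two lemmas your outline is missing.
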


Before we start with the proof, we need a lemma which is intuitively clear, but needs a certain argumentation:
\begin{lemma}\label{lemma_osc}
Let $\xi$ be a Lévy process which is not the negative of a subordinator. Then, for all $y,z >0$,
$$\mP(T_{(-\infty,-y]} > T_{[z,\infty)})>0.$$
\end{lemma}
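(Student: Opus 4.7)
The plan is to split into two cases based on whether $\xi$ has upward jumps. In Case A ($\Pi((0,\infty))=0$), $\xi$ is spectrally non-positive and, by hypothesis, not non-increasing, so scale-function theory for spectrally negative L\'evy processes (Kyprianou \cite{Kyp_01}, Chapter 8) applies and gives the identity
\[
\mP\big(T_{[z,\infty)} < T_{(-\infty,-y]}\big) \;=\; \frac{W(y)}{W(y+z)},
\]
where $W$ is the $0$-scale function of $\xi$. Under the hypothesis, $W$ is strictly positive on $(0,\infty)$ (equivalently, the ascending ladder subordinator is non-trivial precisely because $\xi$ is not the negative of a subordinator), so the right-hand side is positive.

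In Case B ($\Pi((0,\infty))>0$), I use a construct-and-iterate strategy. Pick $\delta>0$ with $\lambda:=\Pi([\delta,2\delta])>0$, and write $\xi=\xi'+J$ as the independent sum of a compound-Poisson process $J$ with intensity measure $\Pi|_{[\delta,2\delta]}$ and a L\'evy process $\xi'$ with the complementary triplet. For small $t_1>0$, the event $E':=\{\sup_{s\le t_1}|\xi'_s|<\delta/2\wedge y/2\}$ has probability tending to $1$ by right-continuity of L\'evy paths at $0$, while $E'':=\{J\text{ has exactly one jump in }[0,t_1]\}$ has probability $\lambda t_1 e^{-\lambda t_1}>0$; by independence, $p:=\mP(E'\cap E'')>0$. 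On $E'\cap E''$ one has $\xi_{t_1}\ge \delta/2=:\epsilon$ and $\inf_{s\le t_1}\xi_s>-y/2>-y$. To extend from this one-step bound to the full claim I iterate: by stationary independent increments, the events
\[
A_k \;:=\; \big\{\xi_{k t_1}-\xi_{(k-1) t_1}\ge \epsilon,\ \inf_{s\in[(k-1) t_1, k t_1]}(\xi_s-\xi_{(k-1) t_1})>-y\big\}, \qquad k=1,\dots,n,
\]
are independent with $\mP(A_k)=p$. On $\bigcap_{k=1}^n A_k$ induction gives $\xi_{k t_1}\ge k\epsilon$ and $\inf_{s\in[(k-1) t_1, k t_1]}\xi_s\ge (k-1)\epsilon-y\ge -y$ for each $k$. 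Setting $n:=\lceil z/\epsilon\rceil$ therefore forces $\xi_{n t_1}\ge z$ and $\inf_{s\le n t_1}\xi_s>-y$, so $T_{[z,\infty)}\le n t_1 < T_{(-\infty,-y]}$ and consequently $\mP(T_{[z,\infty)}<T_{(-\infty,-y]})\ge p^n>0$.

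The main obstacle is the one-step estimate in Case B: one must simultaneously exploit the rare event ``exactly one positive jump of the right size in $[0,t_1]$'' and the common event ``the remainder $\xi'$ fluctuates by less than $y/2$''. The L\'evy--It\^o decomposition guarantees the independence of these two events, and the small-time path regularity of $\xi'$ (in particular $\sup_{s\le t_1}|\xi'_s|\to 0$ in probability as $t_1\to 0$) provides the second. Case A follows directly from the scale-function identity once one recalls that the assumption ``not the negative of a subordinator'' is exactly what makes the Laplace exponent a proper Bernstein-type function on $[0,\infty)$ and therefore its scale function strictly positive away from $0$.
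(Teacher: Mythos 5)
Your proof is correct, but it takes a genuinely different route from the paper's. The paper argues by contradiction: assuming $\mP(T_{(-\infty,-y]}\le T_{[z,\infty)})=1$, it uses spatial homogeneity and the strong Markov property at successive passages below $-y,-2y,\dots$ to deduce $\mP(T_{[z,\infty)}<s)\le\mP^{-ny}(T_{[z,\infty)}<s)\to 0$, hence $\mP(T_{[z,\infty)}<\infty)=0$, which forces $\xi$ to be the negative of a subordinator. That argument is short, needs no case distinction, and uses nothing beyond the Markov property and the a.s.\ finiteness of $\sup_{u\le s}\xi_u$. Your argument is constructive: in the spectrally non-positive case you invoke the two-sided exit identity $W(y)/W(y+z)>0$, and otherwise you isolate a compound Poisson piece of upward jumps via the L\'evy--It\^o decomposition and build, by independence of increments, an explicit staircase event of probability at least $p^n$ on which the process reaches $[z,\infty)$ before $(-\infty,-y]$. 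This costs more machinery (scale functions, L\'evy--It\^o) and a case split, but it buys a quantitative lower bound $p^{\lceil z/\epsilon\rceil}$ on the probability, which the paper's contradiction argument does not provide. Two cosmetic points: in Case A you should note that the degenerate sub-case of a pure positive drift (where ``spectrally negative'' in the strict sense may not apply) is trivially covered, since then $T_{(-\infty,-y]}=\infty$; and in the induction the displayed bound should read $\inf_{s\in[(k-1)t_1,kt_1]}\xi_s>(k-1)\epsilon-y\ge -y$ with the first inequality strict, inherited from the strict inequality in the definition of $A_k$, so that the path indeed never enters the closed set $(-\infty,-y]$. Neither affects the validity of the argument.
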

\begin{proof}
Assume $\mP(T_{(-\infty,-y]} \leq T_{[z,\infty)}) = 1$. Then it follows of course that 
$\mP^x(T_{(-\infty,x-y]} \leq T_{[z,\infty)}) = 1$ for all $x<0$. With the Markov property we get, for $s > 0$,
\begin{align*}
\mP(T_{[z,\infty)} < s) &= \mE\Big[ \mP^{\xi_{T_{(-\infty,-y]}}}(T_{[z,\infty)} < s) \Big] \\
&\leq  \mP^{-y}(T_{[z,\infty)} < s) \\
&= \mE^{-y}\Big[ \mP^{\xi_{T_{(-\infty,-2y]}}}(T_{[z,\infty)} < s) \Big] \\
&\leq \mP^{-2y}(T_{[z,\infty)} < s).
\end{align*}
Inductively we get $\mP(T_{[z,\infty)} < s) \leq \mP^{-ny}(T_{[z,\infty)} < s)$
for all $n \in \mN$ and hence
$$\mP(T_{[z,\infty)} < s) \leq \lim_{n \rightarrow \infty} \mP^{-ny}(T_{[z,\infty)} < s) = 0.$$
With this we see
\begin{align*}
\mP(T_{[z,\infty)} < +\infty) = \lim_{s \rightarrow \infty} \mP(T_{[z,\infty)} < s)
\leq  \lim_{s \rightarrow \infty} \lim_{n \rightarrow \infty} \mP^{-ny}(T_{[z,\infty)} < s)
= 0,
\end{align*}
but this cannot happen unless $\xi$ is the negative of a subordinator. This concludes the proof.
\end{proof}

To prove Proposition \ref{lemma_finite} we will combine two statements. The discrete analogous statements were also used (with different arguments) by Vysotsky \cite{Vys_01} to show finiteness of the harmonic function in the discrete case.
\begin{lemma}\label{lemma_help_1}
  Suppose that $\mE[H_1]<\infty$, then
  $$\gamma_+ := \sup_{x<a}\mP^x(\xi_{T_{[a,\infty)}}>b,T_{[a,\infty)}<\infty ) <1.$$ 
\end{lemma}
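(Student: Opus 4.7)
The claim is a uniform bound on the tail of the ascending ladder-height overshoot of $\xi$, and my approach combines a renewal-theoretic limit for $y$ large with a direct estimate for $y$ small, joined by continuity.

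First, set $c := b-a > 0$ and, for $x<a$, $y := a - x > 0$. Define $O_y := H_{\tau^+_y} - y$, where $\tau^+_y := \inf\{t : H_t > y\}$ is the first passage time of the ascending ladder-height subordinator over level $y$. The classical identification of the overshoot of $\xi$ above $a$ with the overshoot of $H$ above $y$ gives
\[
\mP^x\bigl(\xi_{T_{[a,\infty)}}>b,\,T_{[a,\infty)}<\infty\bigr) = \mP(O_y > c),
\]
so the lemma reduces to showing $\sup_{y>0}\mP(O_y>c)<1$. Plugging \eqref{overshoot} into $\mP^x(\xi_{T_{[a,\infty)}}>b)$ and changing variables $v=u-x$ produces the convenient integral representation
\[
\mP(O_y>c) = \int_{[0,y]} \mu_+\bigl((y+c-v,\infty)\bigr)\, U_+(\dd v).
\]

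Second, the behaviour for $y\to\infty$ is handled by renewal theory. Since $\mE[H_1]<\infty$, Blackwell's renewal theorem for subordinators gives the vague convergence $U_+(y-\dd w)\to \mE[H_1]^{-1}\dd w$, and $w\mapsto \mu_+((c+w,\infty))$ is directly Riemann integrable on $[0,\infty)$ with $\int_0^\infty \mu_+((c+w,\infty))\,\dd w = \int_c^\infty(v-c)\,\mu_+(\dd v) \le \mE[H_1]$. Hence
\[
\lim_{y\to\infty}\mP(O_y>c) = \frac{1}{\mE[H_1]}\int_c^\infty(v-c)\,\mu_+(\dd v).
\]
This limit is strictly less than $1$: a direct computation yields
\[
\mE[H_1] - \int_c^\infty(v-c)\,\mu_+(\dd v) = a_+ + \int_{(0,c]}v\,\mu_+(\dd v) + c\,\mu_+((c,\infty)),
\]
which is positive because $c>0$ and $H$ is non-trivial.

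Third, for $y\downarrow 0$ the integrand in the representation above is bounded by $\mu_+((c,\infty))<\infty$ uniformly in $v\in[0,y]$ (since $y+c-v\ge c$), while continuity of $U_+$ (a consequence of $\xi$ not being compound Poisson, which is part of our standing framework) implies $U_+([0,y])\to 0$ as $y\downarrow 0$. Hence $\mP(O_y>c)\le \mu_+((c,\infty))\,U_+([0,y])\to 0$. Combining the two asymptotic bounds with continuity of $y\mapsto\mP(O_y>c)$ on $(0,\infty)$ and a standard compactness argument on $[\delta,R]\subset (0,\infty)$ then delivers $\sup_{y>0}\mP(O_y>c)<1$.

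\textbf{Main obstacle.} The renewal-theoretic step is essentially mechanical. The real care lies in ruling out interior values of $y$ at which $\mP(O_y>c)$ might equal $1$ — which, in the absence of drift of $H$, would force the first jump of $H$ over $y$ to systematically land beyond $y+c$. This is where continuity of $U_+$ and the small-jump behaviour of $\mu_+$ (both traceable to the non-compound-Poisson hypothesis on $\xi$) play their role, allowing the asymptotic bounds at $0$ and $\infty$ to be upgraded to a uniform strict inequality.
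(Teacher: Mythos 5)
Your reduction to the overshoot $O_y$ of the ascending ladder height subordinator and the renewal-theoretic computation of $\lim_{y\to\infty}\mP(O_y>c)$ are correct and essentially reproduce the paper's treatment of the regime $x\to-\infty$ (the paper invokes the stationary overshoot distribution, which is the same fact). However, the other two regimes contain genuine gaps. First, your claim that $\mP(O_y>c)\le \mu_+((c,\infty))\,U_+([0,y])\to 0$ as $y\downarrow 0$ uses $U_+([0,y])\to 0$, which holds only when $U_+(\{0\})=0$, i.e.\ when $0$ is regular for $(0,\infty)$. If $\xi$ is irregular upwards, $H$ is a compound Poisson subordinator (even though $\xi$ itself is not compound Poisson), $U_+(\{0\})=1/\lambda>0$, and your bound near $0$ only gives $\mu_+((c,\infty))/\lambda$, which need not be $<1$ without further information on $\mu_+((0,c])$. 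The paper splits into the regular and irregular cases at exactly this point, and in the irregular case replaces the analytic estimate by a probabilistic one.

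Second, and more seriously, the step you yourself flag as ``the main obstacle'' is not carried out. To pass from the two asymptotic bounds to $\sup_{y>0}\mP(O_y>c)<1$ you need both (i) $\mP(O_{y_0}>c)<1$ for every fixed $y_0$ in a compact interval, and (ii) enough regularity of $y\mapsto\mP(O_y>c)$ (at least upper semicontinuity) to turn the pointwise bound into a uniform one. Neither is established: continuity of the map $y\mapsto\int_{[0,y]}\mu_+((y+c-v,\infty))\,U_+(\dd v)$ can fail when $U_+$ or $\mu_+$ have atoms, and the pointwise strict inequality is exactly the content of the lemma for interior $x$ --- it does not follow from ``continuity of $U_+$ and the small-jump behaviour of $\mu_+$'', since the non-compound-Poisson hypothesis on $\xi$ does not prevent $\mu_+$ from being a finite measure supported away from $0$, nor does it force $a_+>0$. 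This is precisely where the paper invests its effort: it proves the auxiliary Lemma \ref{lemma_osc}, fixes a far-left level $K$, and decomposes the event according to whether the process visits $(-\infty,K]$ before $[a,\infty)$; the uniform deficit then comes from $\inf_{x}\mP^x(T_{(-\infty,K]}<T_{[a,\infty)})>0$ on the middle range, handled separately in the regular and irregular cases. Some substitute for that mechanism (or an actual proof of (i) and (ii)) is needed before your compactness argument can close.
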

\begin{proof}
If $\xi$ is the negative of a subordinator it holds $\gamma_+=0$. So assume that $\xi$ is not the negative of a subordinator, in particular we can apply Lemma \ref{lemma_osc}.

\smallskip

We separate three regions of the range of $x$. First we consider very small $x$, i.e. we consider the limit of $x$ tending to $-\infty$, then we consider the values of $x$ which are close to $a$ and last we treat the remaining values.

\smallskip

We begin with $x$ close to $-\infty$. If $\xi$ drifts to $-\infty$, then
$\mP^x(T_{[a,\infty]}< \infty) \to 0$ as $x \searrow -\infty$, and in particular
$\mP^x(\xi_{T_{[a,\infty)}}>b, T_{[a,\infty)}<\infty) \to 0$ also. Therefore there exist
a $K < a$ and a $\gamma_1<1$ such that $\mP^x(\xi_{T_{[a,\infty)}}>b, T_{[a,\infty)}<\infty) <\gamma_1$ when $x \leq K$.

If $\xi$ oscillates or drifts to $\infty$, the bound for $x$ close to $-\infty$ is more
involved.
Because $\mE\la H_1 \ra <\infty$, $\xi$ has stationary overshoots in the sense that the weak limit of $\mP^x(\xi_{T_{[a,\infty)}} \in \dd y)$ for $x \searrow -\infty$ exists. It can be expressed as
\begin{align}\label{d}
\wlim\limits_{x \searrow -\infty} \mP^x(\xi_{T_{[a,\infty)}} \in \dd y) = \frac{1}{\mE\la H_1 \ra} (a_+\delta_a(\dd y) + \bar{\mu}_+(y-a) \dd y),
\end{align}
where $a_+$ is the drift of $(H,\mP)$ and $\mu_+$ its Lévy measure with the right-tail $\bar\mu_+$. For the first special version of a subordinator see for example Bertoin et al. \cite{Bert_Har_Ste}, for the general version for example Bertoin and Savov \cite{Bert_Sav_01}. Since weak convergence is equivalent to the pointwise convergence of the distribution function at continuity points, due to the explicit formula in \eqref{d} it holds that, for $b>a$,
\begin{align*}
\lim\limits_{x \rightarrow -\infty} \mP^x(\xi_{T_{[a,\infty)}}>b) &= \frac{1}{\mE\la H_1 \ra} \int\limits_{(b,\infty)}  \bar{\mu}_+(y-a) \dd y\\
&= \frac{1}{\mE\la H_1 \ra} \int\limits_{(b-a,\infty)} \bar{\mu}_+(y) \dd y\\
&<  \frac{1}{\mE\la H_1 \ra} \int\limits_{(0,\infty)} \bar{\mu}_+(y) \dd y\\
&\leq 1
\end{align*}
Hence, also in this case there exist a $K<a$ and a $\gamma_1<1$ such that
$$\mP^x(\xi_{T_{[a,\infty)}}>b) \leq \gamma_1$$
for all $x \leq K$. Now we have to treat the case $x \in (K,a)$. Therefore we separate two cases.

\smallskip

Case $1$: The process $\xi$ is regular upwards. First, we consider the limit for $x \rightarrow a$. Since $\xi$ is regular upwards it holds
$$\lim\limits_{x \rightarrow a} \mP^x(\xi_{T_{[a,\infty)}} >b, T_{[a,\infty)}<\infty) < 1$$
and hence, there is some $\delta>0$ such that
$$\gamma_2:=\sup\limits_{x \in (a-\delta,a)} \mP^x(\xi_{T_{[a,\infty)}} >b,T_{[a,\infty)}< \infty) <1.$$
It remains to consider $x \in (K,a-\delta]$. First note that
\begin{align*}
&\mP^x(\xi_{T_{[a,\infty)}}>b, T_{[a,\infty)}<\infty)\\
&= \mP^x(\xi_{T_{[a,\infty)}}>b, T_{(-\infty,K]}< T_{[a,\infty)}<\infty) +\mP^x(\xi_{T_{[a,\infty)}}>b, T_{(-\infty,K]}> T_{[a,\infty)}).
\end{align*}
For the first term we use the Markov property to get
\begin{align*}
\mP^x(\xi_{T_{[a,\infty)}}>b, T_{(-\infty,K]}< T_{[a,\infty)}<\infty) &= \mE^x\Big[\1_{\{ T_{(-\infty,K]}< T_{[a,\infty)}<\infty \}} \mP^{\xi_{T_{(-\infty,K]}}}(\xi_{T_{[a,\infty)}}>b) \Big] \\
&\leq \gamma_1 \mP^x(T_{(-\infty,K]}< T_{[a,\infty)}<\infty)\\
&\leq \gamma_1 \mP^x(T_{(-\infty,K]}< T_{[a,\infty)}).
\end{align*}
Together we have for all $x \in (K,a-\delta]$:
\begin{align*}
\mP^x(\xi_{T_{[a,\infty)}}>b, T_{[a,\infty)}<\infty) &\leq \sup\limits_{x \in (K,a-\delta]} \left(\mP^x(\xi_{T_{[a,\infty)}}>b, T_{(-\infty,K]}< T_{[a,\infty)}<\infty) \right.\\
&\quad \quad\quad \quad\quad \quad \left. +\mP^x(\xi_{T_{[a,\infty)}}>b, T_{(-\infty,K]}> T_{[a,\infty)})\right) \\
&\leq \sup\limits_{x \in (K,a-\delta]} \left(\gamma_1 \mP^x(T_{(-\infty,K]}< T_{[a,\infty)}) +\mP^x(T_{(-\infty,K]}> T_{[a,\infty)})\right) \\
&=: \gamma_3.
\end{align*}
With Lemma \ref{lemma_osc} we get
$$\sup\limits_{x \in (K,a-\delta)} \mP^x(T_{(-\infty,K]} > T_{[a,\infty)}) =  \mP^{a-\delta}(T_{(-\infty,K]} > T_{[a,\infty)}) <1$$
or, equivalently,
$$\inf\limits_{x \in (K,a-\delta)} \mP^x(T_{(-\infty,K]} < T_{[a,\infty)})>0.$$
Because of this it follows that
$$\gamma_3 < \sup\limits_{x \in (K,a-\delta)} \left( \mP^x(T_{(-\infty,K]} < T_{[a,\infty)}) +  \mP^x(T_{(-\infty,K]} > T_{[a,\infty)})\right) = 1.$$

Case $2$: The process $\xi$ is not regular upwards. In this case it holds
$$\sup\limits_{x \in (K,a)} \mP^x(T_{[a,\infty)}<T_{(-\infty,K]})<1 .$$
or equivalently
\begin{align}\label{help33}
\inf\limits_{x \in (K,a)} \mP^x(T_{(-\infty,K]}<T_{[a,\infty)})>0 .
\end{align}
We split up again \begin{align*}
&\mP^x(\xi_{T_{[a,\infty)}}>b, T_{[a,\infty)}<\infty)\\
&= \mP^x(\xi_{T_{[a,\infty)}}>b, T_{(-\infty,K]}< T_{[a,\infty)}<\infty) +\mP^x(\xi_{T_{[a,\infty)}}>b, T_{(-\infty,K]}> T_{[a,\infty)}).
\end{align*}
For the first term we use the Markov property to get
\begin{align*}
\mP^x(\xi_{T_{[a,\infty)}}>b, T_{(-\infty,K]}< T_{[a,\infty)}<\infty) &= \mE^x\Big[\1_{\{ T_{(-\infty,K]}< T_{[a,\infty)}<\infty \}} \mP^{\xi_{T_{(-\infty,K]}}}(\xi_{T_{[a,\infty)}}>b) \Big] \\
&\leq \gamma_1 \mP^x(T_{(-\infty,K]}< T_{[a,\infty)}<\infty)\\
&\leq \gamma_1 \mP^x(T_{(-\infty,K]}< T_{[a,\infty)}).
\end{align*}
Together we have for all $x \in (K,a)$:
\begin{align*}
\mP^x(\xi_{T_{[a,\infty)}}>b, T_{[a,\infty)}<\infty) &\leq \sup\limits_{x \in (K,a)} \left(\mP^x(\xi_{T_{[a,\infty)}}>b, T_{(-\infty,K]}< T_{[a,\infty)}< \infty) \right.\\
&\quad \quad\quad \quad\quad \quad \left. +\mP^x(\xi_{T_{[a,\infty)}}>b, T_{(-\infty,K]}> T_{[a,\infty)})\right) \\
&\leq \sup\limits_{x \in (K,a)} \left(\gamma_1 \mP^x(T_{(-\infty,K]}< T_{[a,\infty)}) +\mP^x(T_{(-\infty,K]}> T_{[a,\infty)})\right) \\
&=: \gamma_3.
\end{align*}
From \eqref{help33} follows that
$$\gamma_3 < \sup\limits_{x \in (K,a)} \left( \mP^x(T_{(-\infty,K]} < T_{[a,\infty)}) +  \mP^x(T_{(-\infty,K]} > T_{[a,\infty)})\right) = 1.$$

\smallskip

For the general case (both, regular upwards and not) set $\gamma_+:= \max(\gamma_1,\gamma_2,\gamma_3)<1$.
\end{proof}

Analogously to the lemma before it holds
$$\gamma_- := \sup_{x>b}\mP^x(\xi_{T_{(-\infty,b]}}<a, T_{(-\infty,b]} < \infty) <1,$$
provided that $\hat\mE[H_1]<\infty$. The second Lemma which we need to prove Proposition \ref{lemma_finite} is the following:
\begin{lemma}\label{lemma_help_2}
Assume $\xi$ oscillates and $\hat{\mE}\la H_1 \ra<\infty$. For all $\alpha \in (0,1)$ there exists a constant $C_+(\alpha)>0$ such that
$$\mE^x\Big[ U_+(a-\xi_{T_{(-\infty,b]}}) \1_{\{ \xi_{T_{(-\infty,b]}} <a \}} \Big] \leq \alpha U_-(x-b) + C_+(\alpha)$$
for all $x>b$.
\end{lemma}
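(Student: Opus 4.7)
My plan is to apply the overshoot formula \eqref{overshootdown} to rewrite
\[ F(x) := \mE^x\bigl[U_+(a - \xi_{T_{(-\infty,b]}}) \1_{\{\xi_{T_{(-\infty,b]}} < a\}}\bigr] \]
as an integral of a decaying function against $U_-$, and then split the integration at a well-chosen threshold. Inserting \eqref{overshootdown} into the definition of $F(x)$ and substituting $r = u - y$ in the inner integral, I obtain
\[ F(x) = \int_{[b,x]} g(u - a) \, U_-(x - \dd u) = \int_0^{x - b} g(x - a - s) \, U_-(\dd s), \]
where $g(y) := \int_{(y, \infty)} U_+(r - y) \, \mu_-(\dd r)$ for $y > 0$, and the second equality comes from the substitution $s = x - u$.

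\textbf{Properties of $g$.} The next step is to show that $g$ is non-increasing, finite on $(0,\infty)$, and satisfies $g(y) \to 0$ as $y \to \infty$. Monotonicity is clear from the defining formula. Finiteness and the decay follow from the naive estimate
\[ g(y) \leq \int_y^\infty U_+(r) \, \mu_-(\dd r) \leq C\bar\mu_-(y) + C \int_y^\infty r \, \mu_-(\dd r), \]
which combines $U_+(r - y) \leq U_+(r)$ with the linear upper bound $U_+(r) \leq C(1 + r)$; the latter holds because $U_+(r)/r$ converges to the finite limit $1/\mE[H_1]$ by the renewal theorem for subordinators. Both summands on the right vanish as $y \to \infty$, because the assumption $\hat{\mE}[H_1] < \infty$ forces $\int r \, \mu_-(\dd r) < \infty$.

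\textbf{Splitting and conclusion.} The remaining ingredient is the subadditivity $U_-(y + h) \leq U_-(y) + U_-(h)$ of the subordinator potential, which follows from strong Markov at the first-passage time of the descending ladder height subordinator; in particular $U_-((x - b - S, x - b]) \leq U_-(S)$ uniformly in $x$. Given $\alpha \in (0, 1)$, I fix $S > 0$ with $g(b - a + S) \leq \alpha$ (possible by the decay just established) and, for $x > b + S$, split
\[ F(x) = \int_0^{x - b - S} g(x - a - s) \, U_-(\dd s) + \int_{x - b - S}^{x - b} g(x - a - s) \, U_-(\dd s). \]
On the first interval one has $g(x - a - s) \leq g(b - a + S) \leq \alpha$, so the contribution is bounded by $\alpha\, U_-(x - b)$. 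On the second, $g(x - a - s) \leq g(b - a)$, and subadditivity bounds the contribution by $g(b - a)\, U_-(S)$. For $x \in (b, b + S]$, the trivial bound $F(x) \leq g(b - a)\, U_-(x - b) \leq g(b - a)\, U_-(S)$ suffices. Setting $C_+(\alpha) := g(b - a)\, U_-(S)$ yields the claimed inequality for every $x > b$.

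\textbf{Main obstacle.} The delicate point is extracting the decay $g(y) \to 0$ from the single moment assumption $\hat{\mE}[H_1] < \infty$; once that is in hand, the subadditivity of $U_-$ is exactly what prevents the additive error $C_+(\alpha)$ from growing with $x$ and hence makes the splitting deliver the desired prefactor $\alpha$.
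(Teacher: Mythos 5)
Your proof is correct, and it takes a genuinely different route through the same analytic input. Both arguments ultimately rest on the fact that $U_+(r)\le C(1+r)$ together with $\int_{(0,\infty)} r\,\mu_-(\dd r)\le \hat\mE[H_1]<\infty$ forces the tail integral of $U_+$ against $\mu_-$ to be small; indeed your decay $g(y)\to 0$ follows from $g(y)\le \int_{(y,\infty)}U_+(r)\,\mu_-(\dd r)$, which is precisely the quantity the paper makes small by choosing $K$ large. The difference lies in the decomposition. The paper splits according to the position of the overshoot $\xi_{T_{(-\infty,b]}}$ relative to $a-K$: the near overshoots contribute at most the constant $U_+(K)$ by monotonicity, while for the far overshoots the inner integral $\int_{(K+w-a,\infty)}U_+(y-w+a)\,\mu_-(\dd y)$ is bounded by $\alpha$ uniformly in the ladder point $w\in[b,x]$, yielding $\alpha U_-(x-b)$ directly. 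You instead split according to the position of the last ladder point relative to $b+S$: high ladder points contribute at most $\alpha U_-(x-b)$ because $g$ is small at large arguments, and the low ladder points contribute a constant only because of the subadditivity $U_-((x-b-S,x-b])\le U_-(S)$ — an extra (standard, but genuinely additional) ingredient that the paper's argument does not need, and which is exactly what keeps your error term from growing with $x$. The trade-off is that the paper needs the shift-monotonicity bound uniform in $w$, whereas you need the renewal-type control of increments of $U_-$; both are routine, and both proofs deliver the statement with explicit constants ($U_+(K)$ versus $g(b-a)U_-(S)$). One small remark: your appeal to the renewal theorem for the linear bound on $U_+$ should be read with the convention $1/\mE[H_1]=0$ when $\mE[H_1]=\infty$, since the lemma does not assume finiteness of the ascending mean; alternatively, the bound $U_+(r)\le (1+r)U_+(1)$ follows from subadditivity of $U_+$ alone.
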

\begin{proof}
We start to show that
$$\int\limits_{(K,\infty)} U_+(y) \, \mu_-(\dd y) < +\infty$$
for all $K>0$. For that we estimate $U_+(y)$ for $y > K$ with Proposition III.1 of Bertoin \cite{Bert_01} which says that there are constants $c_1,c_2 \geq 0$ such that
$$U_+(x) \leq c_1 \left( \Phi\left(\frac{1}{x}\right) \right)^{-1} \quad \text{and} \quad \Phi(x) \geq c_2 x \left(I\left(\frac{1}{x}\right) + a_+\right)$$
for all $x >0$, where $\Phi(\lambda) = \mE\big[ \int_{[0,\infty)} e^{-\lambda H_t} \, \dd t \big]$ and $I(x) = \int_{(0,x]} \bar{\mu}_+(y) \, \dd y$. We combine these two statements as follows:
\begin{align*}
U_+(x) \leq c_1 \Big( \Phi\big(\frac{1}{x}\big) \Big)^{-1} 
\leq c_1 \Big( c_2 \frac{1}{x} (I(x) + a_+)\Big)^{-1} 
= \frac{c_1}{c_2} \frac{x}{I(x) + a_+} 
\leq \frac{c_1}{c_2} \frac{x}{I(K)} 
= c_K x
\end{align*}
for all $x>K$, where $c_K = \frac{c_1}{c_2 I(K)}$. Hence, by assumption,
\begin{align*}
\int\limits_{(K,\infty)} U_+(y) \, \mu_-(\dd y) &\leq c_K \int\limits_{(K,\infty)} y \, \mu_-(\dd y)
\leq c_K\hat{\mE}\la H_1 \ra 
< +\infty
\end{align*}
for all $K>0$. The second inequality can be seen from
$\hat{\mE}\la H_1 \ra = \int\limits_{(0,\infty)} y \, \mu_-(\dd y) + a_-$
because $H$ is a subordinator. Now, for fixed $\alpha \in (0,1)$, choose $K=K(\alpha)>0$ such that
\begin{align}\label{gl_K}
\int\limits_{(K,\infty)} U_+(y) \, \mu_-(\dd y) < \alpha.
\end{align}
To prove the claim let us first split as
\begin{align*}
&\quad\mE^x\Big[ U_+(a-\xi_{T_{(-\infty,b]}}) \1_{\{ \xi_{T_{(-\infty,b]}} <a \}} \Big] \\
&= \mE^x\Big[ U_+(a-\xi_{T_{(-\infty,b]}}) \1_{\{ \xi_{T_{(-\infty,b]}} \in [a-K,a) \}} \Big] + \mE^x\Big[ U_+(a-\xi_{T_{(-\infty,b]}}) \1_{\{ \xi_{T_{(-\infty,b]}} \in (-\infty,a-K) \}} \Big]
\end{align*}
and estimate the first summand, using monotonicity of $U_+$, as
\begin{align*}
\mE^x\Big[ U_+(a-\xi_{T_{(-\infty,b]}}) \1_{\{ \xi_{T_{(-\infty,b]}} \in [a-K,a) \}} \Big] &\leq U_+(K).
\end{align*}
Applying the overshoot formula \eqref{overshootdown} the second summand can be treated in the following way:
\begin{align*}
&\quad \mE^x\Big[ U_+(a-\xi_{T_{(-\infty,b]}}) \1_{\{ \xi_{T_{(-\infty,b]}} \in (-\infty,a-K) \}} \Big] \\
&= \int\limits_{(-\infty,a-K)} U_+(a-y) \, \mP^x(\xi_{T_{(-\infty,b]}} \in \dd y) \\
&=\int\limits_{[b,x]} \Big( \int\limits_{(-\infty,a-K)} U_+(a-y) \, \mu_-(w-\dd y) \Big)\, U_-(x-\dd w)\\
&= \int\limits_{[b,x]} \Big( \int\limits_{(K+w-a,\infty)} U_+(y-w+a) \, \mu_-(\dd y) \Big)\, U_-(x-\dd w) \\
&\leq \int\limits_{[b,x]} \Big( \int\limits_{(K,\infty)} U_+(y) \, \mu_-(\dd y) \Big) \, U_-(x-\dd w) \\
&\leq \alpha U_-(x-b).
\end{align*}

Defining $C_+(\alpha)\coloneqq U_+(K)$ we proved
$$\mE^x\big[ U_+(a-\xi_{T_{(-\infty,b]}}) \1_{\{ \xi_{T_{(-\infty,b]}} <a \}} \big] \leq \alpha U_-(x-b) + C_+(\alpha)$$
for all $x >b$.
\end{proof}
Analogously to the lemma above one can show in the case that $\xi$ oscillates and $\mE \la H_1 \ra <\infty$ that for all $\alpha \in (0,1)$ there exists a constant $C_-(\alpha)>0$ such that
$$\mE^x\big[ U_-(\xi_{T_{[a,\infty)}}- b) \1_{\{ \xi_{T_{[a,\infty)}} >b \}} \big] \leq \alpha U_+(a-x) + C_-(\alpha),\quad x<a.$$

Now we are ready to combine Lemmas \ref{lemma_help_1} and \ref{lemma_help_2} to show finiteness of $h_+(x)$. The idea how to combine them was also used by Vysotsky \cite{Vys_01}.
\begin{proof}[Proof of Proposition \ref{lemma_finite}]
Let $\alpha \in (0,1)$ be abitrary. In the first step we use the finiteness of $\mE\la H_1 \ra$ and $\hat\mE \la H_1 \ra$ combined with Lemmas \ref{lemma_help_1} and \ref{lemma_help_2} to find an upper bound for 
$$ \int\limits_{(b,\infty)} U_-(y-b) \, \nu^x_{2k}(\dd y),\quad x>b.$$
 Set $\gamma = \max(\gamma_+,\gamma_-)$ and note by Lemma \ref{lemma_help_1} that for $x>b$ and $k \geq 1$:
\begin{align*}
\nu^x_{2k-1}(-\infty,a) &= \int\limits_{(b,\infty)} \mP^y(\xi_{T_{(-\infty,b]}} <a) \, \nu^x_{2k-2}(\dd y) \\
&\leq \gamma \nu^x_{2k-2}(b,\infty) \\
&= \gamma \Big(\1_{\{k=1\}} + \1_{\{k\geq 2\}} \int\limits_{(-\infty,a)} \mP^y(\xi_{T_{[a,\infty)}} >b) \, \nu^x_{2k-3}(\dd y)\Big) \\
&\leq \gamma \Big(\1_{\{k=1\}} + \gamma \1_{\{k\geq 2\}} \nu^x_{2k-3}(-\infty,a)\Big).
\end{align*} 
Inductively we get 
$$\nu^x_{2k-1}(-\infty,a)\leq \gamma^{2k-1}$$
for $x>b$ and $k \geq 1$. Analogously for $k \geq 1$ we can show
$$\nu^x_{2k}(b,\infty) \leq \gamma^{2k}$$
for $x>b$ and
$$\nu^x_{2k-1}(b,\infty) \leq \gamma^{2k-1} \quad \text{and} \quad \nu^x_{2k}(-\infty,a) \leq \gamma^{2k-1}$$
for $x<a$. Now set $C(\alpha) = \max(C_-(\alpha),C_+(\alpha))$ and use Lemma \ref{lemma_help_2} for $k \geq 1$ to find
\begin{align*}
\int\limits_{(b,\infty)} U_-(y-b) \, \nu^x_{2k}(\dd y) &= \int\limits_{(-\infty,a)} \Big( \int\limits_{(b,\infty)} U_-(y-b) \, \mP^v(\xi_{T_{[a,\infty)}} \in \dd y) \Big) \, \nu^x_{2k-1}(\dd v) \\
&\leq \int\limits_{(-\infty,a)} \alpha U_+(a-v)\nu^x_{2k-1}(\dd v) +C(\alpha) \, \nu^x_{2k-1}(-\infty,a) \\
&\leq \alpha \int\limits_{(-\infty,a)}  U_+(a-v) \, \nu^x_{2k-1}(\dd v) +C(\alpha)  \gamma^{2k-1}.
\end{align*}
We estimate the first term in the same way by
$$\alpha^2\int\limits_{(b,\infty)}  U_-(b-y)\nu^x_{2k-2}(\dd y) + C(\alpha) \alpha  \gamma^{2k-2}$$
and hence,
$$\int\limits_{(b,\infty)} U_-(y-b) \, \nu^x_{2k}(\dd y) \leq \alpha^2\int\limits_{(b,\infty)}  U_-(b-y)\,\nu^x_{2k-2}(\dd y) + C(\alpha) (\gamma^{2k-1} +  \alpha  \gamma^{2k-2}).$$
Going on with this procedure until $\nu^x_0$ we see
\begin{align*}
\int\limits_{(b,\infty)} U_-(y-b) \, \nu^x_{2k}(\dd y) &\leq U_-(x-b) \alpha^{2k} + C(\alpha) \sum_{i=0}^{2k-1} \gamma^i \alpha^{2k-1-i} \\
 &= U_-(x-b) \alpha^{2k} + C(\alpha) \alpha^{2k-1} \sum_{i=0}^{2k-1} \left(\frac{\gamma}{\alpha}\right)^i  .
\end{align*}
Now note 
$$\alpha^{2k-1} \sum_{i=0}^{2k-1} \left(\frac{\gamma}{\alpha}\right)^i = \alpha^{2k-1} \frac{\big(\frac{\gamma}{\alpha}\big)^{2k}-1}{\frac{\gamma}{\alpha}-1} = \frac{\gamma^{2k}-\alpha^{2k}}{\gamma-\alpha}$$
and hence
\begin{align*}
	\int\limits_{(b,\infty)} U_-(y-b) \, \nu^x_{2k}(\dd y)  &\leq U_-(x-b) \alpha^{2k} + \frac{C(\alpha)}{\gamma-\alpha} (\gamma^{2k}-\alpha^{2k})
\end{align*}
for $k\geq 1$ (for $k=0$ we get obiously $U_-(x-b)$ as upper bound). In the same way we get for $x<a$:
$$\int\limits_{(b,\infty)} U_-(y-b) \, \nu^x_{2k+1}(\dd y) \leq U_+(a-x) \alpha^{2k+1} + \frac{C(\alpha)}{\gamma-\alpha} (\gamma^{2k+1}-\alpha^{2k+1})$$
for $k \geq 0$ (here we get an upper bound dependend on $U_+$ because the number of steps is odd). All together we get
\begin{align*}
&\quad h_+(x) \\
&\leq \1_{(b,\infty)}(x) U_-(x-b) \sum_{k=0}^\infty \alpha^{2k}  +   \1_{(-\infty,a)}(x) U_+(a-x) \sum_{k=0}^\infty \alpha^{2k+1} + \frac{C(\alpha)}{\gamma-\alpha} \sum_{k=0}^\infty (\gamma^k-\alpha^k) \\
&=  \frac{1}{1-\alpha^2} U_-(x-b)\1_{(b,\infty)}(x)  +   \frac{\alpha}{1-\alpha^2} U_+(a-x) \1_{(-\infty,a)}(x) +  \frac{C(\alpha)}{\gamma-\alpha}\big(\frac{1}{1-\gamma}-\frac{1}{1-\alpha}\big)
\end{align*}
which finishes the proof of Proposition \ref{lemma_finite}.
\end{proof}

\subsection{Harmonicity of \texorpdfstring{$h_+$}{h\textunderscore +} and \texorpdfstring{$h_-$}{h\textunderscore -}}
In this section we give the proof of Theorem \ref{thm_harmonic}. Define, for $q \geq 0$ and $x \notin [a,b]$, the auxiliary functions
\begin{align*}
h^q_+(x) &:=
\begin{cases}
\sum\limits_{k=0}^\infty \int\limits_{(b,\infty)} U^q_-(y-b) \,\nu^x_{2k}(\dd y) &\quad \text{if } x>b\\
\sum\limits_{k=0}^\infty \int\limits_{(b,\infty)} U^q_-(y-b) \,\nu^x_{2k+1}(\dd y) & \quad \text{if } x<a
\end{cases}\\
&=\begin{cases}
\sum\limits_{k=0}^\infty \mE^x\Big[ U^q_-(\xi_{\tau_{2k}}-b)\1_{\{ K^{\dagger} \geq 2k, \tau_{2k} <\infty \}} \Big] &\quad \text{if } x>b\\
\sum\limits_{k=0}^\infty \mE^x\Big[ U^q_-(\xi_{\tau_{2k+1}}-b) \1_{\{ K^{\dagger} \geq 2k+1, \tau_{2k+1} <\infty \}} \Big] &\quad \text{if } x<a
\end{cases},
\end{align*}
where $U^q_-(\dd x) \coloneqq \hat{\mE} \big[ \int_{[0,\infty)} e^{-qt} \1_{\{ H_t \in \dd x, L_t^{-1} <\infty\}} \, \dd t \big]$ is the $q$-potential of the dual ladder height process. It follows immediately that $h_+^q(x) \leq h_+(x)$ for all $x \notin [a,b]$ and by monotone convergence that $h_+^q$ converges pointwise to $h_+$ for $q \searrow 0$.
\begin{proposition}\label{prop_asymp}
Assume $(A)$ and let $e_q$ be independent exponentially distributed random variables with parameter $q>0$. Then, for $x \notin [a,b]$,
\begin{align}
	\frac{1}{\hat\kappa(q)} \mP^x\big(e_q < T_{[a,b]}, \xi_{e_q} >b\big) \leq h_+^q(x), \quad q > 0,
\end{align}
and
\begin{align}
\lim_{q \searrow 0} \frac{1}{\hat\kappa(q)} \mP^x\big(e_q < T_{[a,b]}, \xi_{e_q} >b\big) = h_+(x).
\end{align}
\end{proposition}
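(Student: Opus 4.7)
The plan is to decompose the event $\{e_q < T_{[a,b]},\ \xi_{e_q} > b\}$ according to the number of successful jump-crossings of $[a,b]$ that have occurred before the independent exponential clock $e_q$ rings, apply the strong Markov property at each such crossing, and reduce the remaining factor to $\hat\kappa(q)\, U_-^q$ via a standard Wiener--Hopf identity.

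The key ingredient is
\[
  \mP^y(e_q < T_{(-\infty,b]}) = \hat\kappa(q)\, U_-^q(y-b),\qquad y > b,
\]
which I would derive from the Wiener--Hopf factorisation: the random variable $-\underline\xi_{e_q}$ has probability law $\hat\kappa(q)\, U_-^q(\dd x)$ on $[0,\infty)$ (see e.g.\ \cite{Bert_01} or \cite{Kyp_01}), and continuity of $U_-^q$, which is inherited from that of $U_-$ under our non-compound-Poisson assumption, ensures that no boundary issue arises.

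For $x > b$, the event $\{e_q < T_{[a,b]},\ \xi_{e_q} > b\}$ admits the disjoint decomposition
\[
  \{e_q < T_{[a,b]},\ \xi_{e_q} > b\}
  = \bigsqcup_{k=0}^\infty \{\tau_{2k} \leq e_q < \tau_{2k+1} \wedge T_{[a,b]}\},
\]
because between two consecutive jump-crossings after an even number of them the process is confined to $(b,\infty)$, and on $\{e_q < T_{[a,b]}\}$ the process has not yet entered $[a,b]$ by any means. Conditioning on $\cF_{\tau_{2k}}$, invoking the strong Markov property together with memorylessness of $e_q$, and noting that starting strictly above $b$ the coincidence $\tau_1 \wedge T_{[a,b]} = T_{(-\infty,b]}$ holds, the displayed identity yields
\[
  \mP^x(e_q < T_{[a,b]},\ \xi_{e_q} > b) = \hat\kappa(q) \sum_{k=0}^\infty \mE^x\!\bigl[ \1_{\{K^\dagger > 2k,\ \tau_{2k}<\infty\}}\, e^{-q\tau_{2k}}\, U_-^q(\xi_{\tau_{2k}}-b) \bigr].
\]
Dividing through by $\hat\kappa(q)$ and using $e^{-q\tau_{2k}} \leq 1$ produces the first claim, and the case $x<a$ is entirely analogous, with the sum running over the odd indices $\tau_{2k+1}$ to match the second branch of $h_+^q$.

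For the limit $q \searrow 0$, both $e^{-q\tau_{2k}}$ and $U_-^q(\cdot)$ increase monotonically to $1$ and $U_-(\cdot)$ respectively, so two applications of monotone convergence (inside each expectation and then on the series) send the $k$-th summand to $\int_{(b,\infty)} U_-(y-b)\,\nu_{2k}^x(\dd y)$ and the total to $h_+(x)$; finiteness of the limit is supplied by Proposition~\ref{lemma_finite}. The main obstacle I anticipate is the careful bookkeeping for the partition, specifically verifying the equality $\tau_1 \wedge T_{[a,b]} = T_{(-\infty,b]}$ starting from $y > b$ (which requires examining both continuous descent to $b$ and jumps across the interval, since $\tau_1$ is defined only as a jump time while $T_{[a,b]}$ may also be a creeping time) and ruling out continuous absorption into $[a,b]$ before $e_q$ on the event of interest; once this is settled, the Wiener--Hopf identity and monotone convergence dispatch the rest.
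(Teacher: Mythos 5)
Your proof is correct and follows essentially the same route as the paper's: both decompose $\{e_q<T_{[a,b]},\,\xi_{e_q}>b\}$ over the excursion intervals $[\tau_{2k},\tau_{2k+1}\wedge T_{[a,b]})$, apply the strong Markov property and memorylessness of $e_q$ at $\tau_{2k}$, and invoke the identity $\hat\kappa(q)U^q_-(y-b)=\mP^y(e_q<T_{(-\infty,b]})$. The only (cosmetic) difference is in the bookkeeping: you keep the exact weight $e^{-q\tau_{2k}}$ inside the expectation and pass to the limit by monotone convergence, whereas the paper sandwiches $\hat\kappa(q)h^q_+(x)$ between $\mP^x(e_q<T_{[a,b]},\xi_{e_q}>b)$ and the same quantity divided by $\mP^x(e_q\geq T_{[a,b]})$, and then uses recurrence to send the normalising factor to $1$.
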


To prove this crucial proposition we need a small lemma which is basically just the strong Markov property:
\begin{lemma} \label{lemma_mp}
Let be $s \geq 0$ and $k \geq 0$. Then it holds
$$\int\limits_{(b,\infty)}  \mP^y (s < T_{(-\infty,b]}) \, \nu^x_{2k}(\dd y) = \mP^x \left( s < \tau_{2k+1} -  \tau_{2k}, K^{\dagger} \geq 2k+1 \right)$$
and
$$ \int\limits_{(-\infty,a)} \mP^y (s <T_{[a,\infty)}) \, \nu^x_{2k+1}(\dd y)= \mP^x \left( s < \tau_{2k+2} -  \tau_{2k+1}, K^{\dagger} \geq 2k+2 \right)$$
for $x>b$ and 
$$\int\limits_{(-\infty,a)}  \mP^y (s < T_{[a,\infty)}) \, \nu^x_{2k}(\dd y) = \mP^x \left( s < \tau_{2k+1} -  \tau_{2k}, K^{\dagger} \geq 2k+1 \right)$$
and
$$ \int\limits_{(b,\infty)} \mP^y (s <T_{(-\infty,b]}) \, \nu^x_{2k+1}(\dd y)= \mP^x \left( s < \tau_{2k+2} -  \tau_{2k+1}, K^{\dagger} \geq 2k+2 \right)$$ for $x<a$.
\end{lemma}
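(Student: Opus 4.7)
The plan is to obtain all four identities as direct applications of the strong Markov property of $\xi$ at the crossing stopping time $\tau_{2k}$ or $\tau_{2k+1}$, paired with a parity observation that pins down on which side of $[a,b]$ the process sits after an even or odd number of crossings. I will focus on the first identity (case $x>b$, index $2k$); the other three follow by the same recipe after changing sides and/or shifting the index by one.

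\smallskip

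The first step is the parity observation. For $x>b$, induction on $k$ using the recursive definition of $\tau_{k+1}$ shows that on $\{K^{\dagger}\geq 2k\}$ the position $\xi_{\tau_{2k}}$ lies in $(b,\infty)\cup[a,b]$, with $\xi_{\tau_{2k}}\in[a,b]$ occurring precisely when $\tau_{2k}=T_{[a,b]}$, i.e.\ when $K^{\dagger}=2k$. Consequently, the restriction of $\nu^x_{2k}$ to $(b,\infty)$ can be rewritten as
$$\nu^x_{2k}\big|_{(b,\infty)}(\dd y)=\mP^x\big(\xi_{\tau_{2k}}\in\dd y,\,\tau_{2k}<\infty,\,K^{\dagger}\geq 2k+1\big).$$
Substituting this into the left-hand side of the first identity and applying the strong Markov property at the stopping time $\tau_{2k}$ (with $\theta$ denoting the usual shift) yields
\begin{align*}
\int_{(b,\infty)}\mP^y(s<T_{(-\infty,b]})\,\nu^x_{2k}(\dd y)
=\mP^x\big(\tau_{2k}<\infty,\,K^{\dagger}\geq 2k+1,\,s<T_{(-\infty,b]}\circ\theta_{\tau_{2k}}\big).
\end{align*}

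\smallskip

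Next, I would identify $T_{(-\infty,b]}\circ\theta_{\tau_{2k}}$ with $\tau_{2k+1}-\tau_{2k}$ on the event just considered. On $\{K^{\dagger}\geq 2k+1,\,\tau_{2k}<\infty\}$ the restarted process begins strictly above $b$; by the very definition of $\tau$ the first downward crossing of $b$ by the restarted process is exactly $\tau_{2k+1}-\tau_{2k}$, and the alternative crossing type (an upward jump across $a$) cannot come first, because the restarted process starts above $b$ and only reaches the level $a$ after first crossing $b$ downwards. Plugging this identification into the display above produces precisely the right-hand side. The remaining three identities are proved in the same way: for the second, apply strong Markov at $\tau_{2k+1}$ and use the parity fact $\xi_{\tau_{2k+1}}\in(-\infty,a)$ on $\{K^{\dagger}\geq 2k+2\}$ for $x>b$, replacing $T_{(-\infty,b]}$ with $T_{[a,\infty)}$; the two identities for $x<a$ follow by the mirror symmetry swapping the roles of $a$ and $b$ and of the two half-lines $(b,\infty)$ and $(-\infty,a)$.

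\smallskip

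The only step that is not a routine application of strong Markov is the identification of $T_{(-\infty,b]}\circ\theta_{\tau_{2k}}$ with $\tau_{2k+1}-\tau_{2k}$: one has to verify that no spurious first entry of the restarted process into $(-\infty,b]$ can precede the next $\tau$-crossing, which is exactly what the parity observation combined with the definition of $\tau$ guarantees. This is the main (mild) obstacle; once it is in place, the rest of the argument is just bookkeeping of indices and stopping times.
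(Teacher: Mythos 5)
Your proposal is correct and follows essentially the same route as the paper: rewrite the restriction of $\nu^x_{2k}$ to $(b,\infty)$ as an expectation over $\{\xi_{\tau_{2k}}>b,\ K^\dagger\ge 2k\}$, apply the strong Markov property at $\tau_{2k}$ in the shift-operator form, and identify $T_{(-\infty,b]}\circ\theta_{\tau_{2k}}$ with $\tau_{2k+1}-\tau_{2k}$ on that event. The paper's proof makes the same identification (writing $\{s<T_{(-\infty,b]}\}\circ\theta_{\tau_{2k}}=\{s+\tau_{2k}<\tau_{2k+1}\}$), so no further comparison is needed.
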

\begin{proof}
We focus on the case $x>b$ and prove the first equality. We use the strong Markov property in the shift operator formulation, see e.g. Chung and Walsh \cite{Chu_Wal_01}, p. 57. Therefore we introduce
$D := \{ \omega:[0,\infty) \rightarrow \mR \,|  \, \omega \text{ is RCLL} \}$. The shift operator is a map $\theta_t: D \rightarrow D$ such that $X_s \circ \theta_t = X_{t+s}$. The strong Markov property tells that for a $(\cF_t)_{t \geq 0}$-stopping time $T$ it holds 
\begin{align}\label{strongMP}
\1_{\{T<\infty\}} \mE^{\xi_{T}} [ Y ] = \1_{\{T<\infty\}} \mE^x \big[ Y \circ \theta_{T} \, |\, \cF_{T} \big]
\end{align}
for all $\cF_\infty := \bigcup_{t\geq 0} \cF_t$-measurable and integrable $Y$. Here, we set $T= \tau_{2k}$ and $Y = \1_{\{ s < T_{(-\infty,b]} \}}$. It is clear that $Y$ is bounded and that $Y$ is $\cF_\infty$-measurable can be seen as follows:
$$\{ s < T_{(-\infty,b]} \} = \{ T_{(-\infty,b]} \leq s \}^{\mathrm{C}} \in \cF_s \subseteq \cF_\infty.$$
With \eqref{strongMP} we obtain for our choice of $Y$:
$$\mP^{\xi_{\tau_{2k}}} \big(s < T_{(-\infty,b]}\big) = \mE^x \big[ \1_{\{ s < T_{(-\infty,b]} \}} \circ \theta_{\tau_{2k}} \, |\, \cF_{\tau_{2k}} \big].$$
Using this we get
\begin{align*}
&\quad\int\limits_{(b,\infty)}  \mP^y (s < T_{(-\infty,b]}) \, \nu^x_{2k}(\dd y) \\
&=  \mE^x \la \1_{\{ \xi_{\tau_{2k}} > b, K^{\dagger} \geq 2k \}}  \mP^{\xi_{\tau_{2k}}} (s < T_{(-\infty,b]})  \ra \\
&= \mE^x \la \1_{\{\xi_{\tau_{2k}} > b, K^{\dagger} \geq 2k \}}  \mE^x \big[ \1_{\{ s < T_{(-\infty,b]} \}} \circ \theta_{\tau_{2k}} \, |\, \cF_{\tau_{2k}} \big]  \ra \\
&= \mE^x \la \1_{\{ \tau_{2k} < T_{[a,b]} \}}  \mP^x(s+\tau_{2k} <  \tau_{2k+1} \, |\, \cF_{\tau_{2k}}) \ra \\
&= \mE^x \la\mP^x(\tau_{2k} < T_{[a,b]}, s < \tau_{2k+1}-\tau_{2k} \, |\, \cF_{\tau_{2k}}) \ra \\
&=\mP^x(\tau_{2k} < T_{[a,b]}, s < \tau_{2k+1}-\tau_{2k} ) \\
&= \mP^x(K^{\dagger} \geq 2k+1, s < \tau_{2k+1}-\tau_{2k} ).
\end{align*}
We used that $\{\xi_{\tau_{2k}} > b \} \in \cF_{\tau_{2k}}$ and $\{ \tau_{2k} < T_{[a,b]} \} \in \cF_{\tau_{2k}} \cap  \cF_{T_{[a,b]}} \subseteq \cF_{\tau_{2k}}$ which can be seen by Theorem 1.3.6 of \cite{Chu_Wal_01}. The remaining claims follow analogously.
\end{proof}

Now we continue the proof of Proposition \ref{prop_asymp} for which we use the identity
\begin{align}\label{33}
	\hat{\kappa}(q)U^q_-(x) = \mP^x(e_q < T_{(-\infty,0]}),\quad x >0, q > 0,
\end{align}
proved by Kyprianou \cite{Kyp_01}, Section 13.2.1 for a general Lévy process.

\begin{proof}[Proof of Proposition \ref{prop_asymp}]
We only consider the case $x>b$ and start to prove the bounds
\begin{align}\label{tttt}
	1\leq \frac{\hat{\kappa}(q) h_+^q(x)}{\mP^x(e_q < T_{[a,b]},\xi_{e_q}>b)} \leq \frac{1}{\mP^x(e_q \geq T_{[a,b]})}.
\end{align}
To derive the lower bound we define 
$\tilde{\tau_k} = \min(\tau_k,T_{[a,b]}).$
It follows, in particular, that $\tilde{\tau}_k=\tau_k$ on $K^{\dagger} \geq k$ and $\tilde{\tau}_{k+1}-\tilde{\tau}_k=0$ on $K^{\dagger} \leq k$. For the next chain of equalities we use \eqref{33}, Lemma \ref{lemma_mp} and the lack of memory property of $e_q$:
\begin{align*}
\hat{\kappa}(q) \int\limits_{(b,\infty)} U^q_-(y-b) \, \nu^x_{2k}(\dd y) &= \int\limits_{(b,\infty)} \mP^y(e_q < T_{(-\infty,b]})\, \nu^x_{2k}(\dd y)\\
&= \mP^x(\tau_{2k+1}-\tau_{2k} > e_q, K^{\dagger} \geq 2k+1)\\
&= \mP^x(\tilde{\tau}_{2k+1}-\tilde{\tau}_{2k} > e_q) \\
&= \mP^x( \tilde{\tau}_{2k+1} > e_q | e_q \geq \tilde{\tau}_{2k}) \nonumber \\
&= \frac{ \mP^x( e_q \in [\tilde{\tau}_{2k},\tilde{\tau}_{2k+1}))}{ \mP^x( e_q \geq \tilde{\tau}_{2k})}.
\end{align*}
Furthermore, it holds that
$$\mP^x( e_q \geq \tilde{\tau}_{2k}) \ge \mP^x( e_q \geq T_{[a,b]})$$
because $\tilde{\tau}_{2k} \leq T_{[a,b]}$. So we obtain
\begin{align}\label{eq} 
\mP^x( e_q \in [\tilde{\tau}_{2k},\tilde{\tau}_{2k+1}))\leq \hat{\kappa}(q) \int\limits_{(b,\infty)} U^q_-(y-b) \, \nu^x_{2k}(\dd y) \leq  \frac{ \mP^x( e_q \in [\tilde{\tau}_{2k},\tilde{\tau}_{2k+1}))}{ \mP^x( e_q \geq T_{[a,b]})}.
\end{align}
Before proving the bounds of \eqref{tttt} we note that
\begin{align}\label{eq1}
\begin{split}
\mP^x(e_q < T_{[a,b]}, \xi_{e_q}>b) &= \mP^x(e_q < \lim_{k \rightarrow \infty} \tilde{\tau}_k,  \xi_{e_q}>b)\\
&= \mP^x\Big(\bigcup_{k=0}^\infty \{e_q \in [\tilde{\tau}_k,\tilde{\tau}_{k+1}),  \xi_{e_q}>b \}\Big)\\
&= \mP^x\Big(\bigcup_{k=0}^\infty \{e_q \in [\tilde{\tau}_{2k},\tilde{\tau}_{2k+1})\}\Big)\\
&= \sum_{k=0}^\infty \mP^x( e_q \in [\tilde{\tau}_{2k},\tilde{\tau}_{2k+1})).
\end{split}
\end{align}
The first equality follows from the definition of $\tilde{\tau}_k$ and the facts that $T_{[a,b]}<\infty$ almost surely (because $\xi$ is recurrent under Assumption (A)) and that $\tau_k$ diverges to $+\infty$ almost surely. The third one is due to the fact that for $x<b$ the process remains above $b$ only in the intervals $[\tilde{\tau}_{2k},\tilde{\tau}_{2k+1})$. With \eqref{eq1}, summing \eqref{eq} over $k$ yields 
\begin{align*}
\hat{\kappa}(q) h^q_+(x) &= \sum_{k=0}^\infty \hat{\kappa}(q)\int\limits_{(b,\infty)} U^q_-(y-b) \, \nu^x_{2k}(\dd y)\\
&\in \Big[\mP^x(e_q < T_{[a,b]}, \xi_{e_q}>b),   \frac{ \mP^x(e_q < T_{[a,b]}, \xi_{e_q}>b)}{ \mP^x( e_q \geq T_{[a,b]})} \Big]
\end{align*}
which is \eqref{tttt}. Since $\xi$ is recurrent $\mP^x(e_q \geq T_{[a,b]})$ converges to $1$ for $q \searrow 0$, hence, \eqref{tttt} implies the claim.
\end{proof}

The key for the proof of Theorem \ref{thm_harmonic} are the relations in Proposition \ref{prop_asymp}. We use them in a similar way Chaumont and Doney \cite{Chau_Don_01} proved harmonicity of a certain function for the Lévy process killed on the negative half-line.
\begin{proof}[Proof of Theorem \ref{thm_harmonic}]
First note that $(B)$ guarantees that $h_+(x)$ is strictly positive for all $x \in \mR \setminus [a,b]$, which is not the case for $x<a$ when $(B)$ fails. From now on Assumption $(B)$ won't be used anymore. For $x \in \mR\setminus [a,b]$ and $t \geq 0$ we have to show
$$\mE^x\big[ \1_{\{ t < T_{[a,b]} \}} h_+(\xi_t) \big] = h_+(x).$$
First we show that the left-hand side is smaller or equal to the right-hand side. This can be done applying Proposition \ref{prop_asymp} in the first step and Fatou's Lemma in the second one:
\begin{align}\label{eq_proof_harm}
\mE^x\big[ \1_{\{ t < T_{[a,b]} \}} h_+(\xi_t) \big] &= \mE^x\Big[ \1_{\{ t < T_{[a,b]} \}} \lim_{q \searrow 0} \frac{1}{\hat\kappa(q)} \mP^{\xi_t}(e_q < T_{[a,b]}, \xi_{e_q} > b) \Big] \nonumber \\
&\leq \lim_{q \searrow 0} \frac{1}{\hat\kappa(q)} \mE^x\Big[ \1_{\{ t < T_{[a,b]} \}} \mP^{\xi_t}(e_q < T_{[a,b]}, \xi_{e_q} > b) \Big]\\
&=\lim_{q \searrow 0} \frac{q}{\hat\kappa(q)} \int\limits_{(0,\infty)} e^{-qs}  \mE^x\Big[ \1_{\{ t < T_{[a,b]} \}} \mP^{\xi_t}(s < T_{[a,b]}, \xi_{s} > b) \Big] \, \dd s \nonumber \\
&=\lim_{q \searrow 0} \frac{q}{\hat\kappa(q)} \int\limits_{(0,\infty)} e^{-qs}  \mP^{x}(s+t < T_{[a,b]}, \xi_{s+t} > b) \, \dd s \nonumber \\
&=\lim_{q \searrow 0} \frac{q}{\hat\kappa(q)} e^{qt} \int\limits_{(t,\infty)} e^{-qs}  \mP^{x}(s < T_{[a,b]}, \xi_{s} > b) \, \dd s \nonumber \\
&=\lim_{q \searrow 0} \frac{q}{\hat\kappa(q)} e^{qt} \int\limits_{(0,\infty)} e^{-qs}  \mP^{x}(s < T_{[a,b]}, \xi_{s} > b) \, \dd s \nonumber \\
&\quad - \lim_{q \searrow 0} \frac{q}{\hat\kappa(q)} e^{qt} \int\limits_{(0,t]} e^{-qs}  \mP^{x}(s < T_{[a,b]}, \xi_{s} > b) \, \dd s \nonumber \\
&=\lim_{q \searrow 0} \frac{1}{\hat\kappa(q)} e^{qt} \mP^{x}(e_q < T_{[a,b]}, \xi_{e_q} > b) \nonumber \\
&\quad - \lim_{q \searrow 0} \frac{q}{\hat\kappa(q)} e^{qt} \int\limits_{(0,t]} e^{-qs}  \mP^{x}(s < T_{[a,b]}, \xi_{s} > b) \, \dd s \nonumber \\
&=h_+(x) - \lim_{q \searrow 0} \frac{q}{\hat\kappa(q)} e^{qt} \int\limits_{(0,t]} e^{-qs}  \mP^{x}(s < T_{[a,b]}, \xi_{s} > b) \, \dd s \nonumber \\
&= h_+(x).\nonumber
\end{align}
The last equality follows because, according to Kyprianou \cite{Kyp_01}, Section 13.2.1, it holds that $\lim_{q \searrow 0} \frac{q}{\hat\kappa(q)}=0$ if $\xi$ oscillates. To show the equality it remains to show that we can replace the inequality in (\ref{eq_proof_harm}) by an equality. To apply the dominated convergence theorem, we use Proposition \ref{prop_asymp} which says also that
$$\frac{1}{\hat\kappa(q)} \mP^{\xi_t}(e_q < T_{[a,b]}, \xi_{e_q} > b) \leq h^q_+(\xi_t) \leq h_+(\xi_t)$$
for all $q > 0$. Furthermore, we have just seen that 
$$\mE^x\big[ \1_{\{ t < T_{[a,b]} \}} h_+(\xi_t) \big] \leq h_+(x) <\infty.$$
So we can apply dominated convergence to switch the limit and the integral.
\end{proof}

\subsection{Conditioning and \texorpdfstring{$h$}{h}-transforms}
The aim of this section is to prove Proposition \ref{thm_cond_pos} and Theorem \ref{thm_cond}.

\begin{proof}[Proof of Proposition \ref{thm_cond_pos}]
Integrating out $e_q$, using Proposition \ref{prop_asymp} and the Markov property, gives
\begin{align*}
& \lim\limits_{q \searrow 0} \mP^x(\Lambda, t < e_q \,|\, e_q<T_{[a,b]}, \xi_{e_q}>b) \\
=& \lim\limits_{q \searrow 0} \frac{1}{\mP^x(e_q < T_{[a,b]}, \xi_{e_q}>b)} \int\limits_{(t,\infty)}q e^{-qs} \mP^x\left(\Lambda, s <T_{[a,b]}, \xi_{s}>b \right) \, \dd s  \\
=& \frac{1}{h_+(x)}\lim\limits_{q \searrow 0} \frac{e^{-qt}}{\hat\kappa(q)} \int\limits_{(0,\infty)}q e^{-qs} \mP^x\left(\Lambda, s+t <T_{[a,b]}, \xi_{s+t}>b \right) \, \dd s  \\
=& \frac{1}{h_+(x)}\lim\limits_{q \searrow 0} \frac{1}{\hat\kappa(q)} \int\limits_{(0,\infty)}q e^{-qs} \mE^x\Big[ \1_\Lambda \1_{\{ t <T_{[a,b]} \}} \mP^{\xi_t}(s<T_{[a,b]},\xi_{s}>b) \Big] \, \dd s  \\
=& \frac{1}{h_+(x)}\lim\limits_{q \searrow 0} \frac{1}{\hat\kappa(q)} \mE^x\Big[ \1_\Lambda \1_{\{ t <T_{[a,b]} \}} \mP^{\xi_t}(e_q<T_{[a,b]},\xi_{e_q}>b) \Big].
\end{align*}
From Proposition \ref{prop_asymp} we also know $\frac{1}{\hat\kappa(q)}\mP^{\xi_t}(e_q<T_{[a,b]},\xi_{e_q}>b) \leq h_+(\xi_t)$ for all $q > 0$ and $\1_\Lambda \1_{\{ t <T_{[a,b]} \}} h_+(\xi_t)$ is integrable since $h_+$ is harmonic. So we can use dominated convergence to conclude
\begin{align*}
& \lim\limits_{q \searrow 0} \mP^x(\Lambda, t < e_q | e_q<T_{[a,b]}, \xi_{e_q}>b) \\
=& \frac{1}{h_+(x)}\mE^x\Big[ \1_\Lambda \1_{\{ t <T_{[a,b]} \}} \lim\limits_{q \searrow 0} \frac{1}{\hat\kappa(q)}  \mP^{\xi_t}(e_q<T_{[a,b]},\xi_{e_q}>b) \Big] \\
=& \mE^x\Bigg[ \1_\Lambda \1_{\{ t <T_{[a,b]} \}} \frac{h_+(\xi_t)}{h_+(x)} \Bigg],
\end{align*}
where we used again Proposition \ref{prop_asymp} in the final equality. Hence, conditioning is possible and coincides with the $h$-transform with $h_+$ which confirms Proposition \ref{thm_cond_pos}.
\end{proof}

For the proof of Theorem \ref{thm_cond} we will use a corollary of Proposition \ref{prop_asymp}.
\begin{corollary}\label{cor_asymp}
Assume $(A)$ and let $e_q$ be an independent exponentially distributed random variable with parameter $q>0$. Then, for $x \notin [a,b]$, we have
\begin{align}
\mP^x(e_q < T_{[a,b]}) \leq \hat\kappa(q) h_+^q(x) + \kappa(q) h_-^q(x), \quad q > 0,
\end{align}
and
\begin{align}
	\lim_{q \searrow 0} \frac{1}{\hat\kappa(q)} \mP^x(e_q < T_{[a,b]}) = h_+(x) + C h_-(x),
\end{align}
where $C = \lim_{q \searrow 0} \frac{\kappa(q)}{\hat\kappa(q)}$.
\end{corollary}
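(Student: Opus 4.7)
The plan is to split the event $\{e_q < T_{[a,b]}\}$ according to the position of $\xi_{e_q}$. Since on this event we have $\xi_{e_q} \notin [a,b]$, we get the clean disjoint decomposition
\[
\mP^x(e_q < T_{[a,b]}) = \mP^x(e_q < T_{[a,b]}, \xi_{e_q} > b) + \mP^x(e_q < T_{[a,b]}, \xi_{e_q} < a),
\]
which reduces everything to controlling the two pieces separately.

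The upper-half piece is handled directly by Proposition \ref{prop_asymp}: it gives both
\[
\mP^x(e_q < T_{[a,b]}, \xi_{e_q} > b) \leq \hat\kappa(q)\, h_+^q(x)
\]
and the limit $\hat\kappa(q)^{-1}\mP^x(e_q < T_{[a,b]}, \xi_{e_q} > b) \to h_+(x)$ as $q \searrow 0$. For the lower-half piece, I would invoke the dual analogue of Proposition \ref{prop_asymp}. The latter is obtained by repeating the proof under $\hat\mP$ (equivalently, by applying the proposition to $-\xi$ with the interval $[-b,-a]$); Assumption $(A)$ is symmetric under duality, and the roles of $\kappa,\hat\kappa$ together with $h_+, h_-$ and $U_+, U_-$ are interchanged. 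This yields
\[
\mP^x(e_q < T_{[a,b]}, \xi_{e_q} < a) \leq \kappa(q)\, h_-^q(x)
\]
together with $\kappa(q)^{-1}\mP^x(e_q < T_{[a,b]}, \xi_{e_q} < a) \to h_-(x)$ as $q \searrow 0$.

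Adding the two bounds gives the first displayed inequality of the corollary. For the limit, divide through by $\hat\kappa(q)$ and rewrite as
\[
\frac{\mP^x(e_q < T_{[a,b]})}{\hat\kappa(q)} = \frac{\mP^x(e_q < T_{[a,b]}, \xi_{e_q} > b)}{\hat\kappa(q)} + \frac{\kappa(q)}{\hat\kappa(q)} \cdot \frac{\mP^x(e_q < T_{[a,b]}, \xi_{e_q} < a)}{\kappa(q)}.
\]
Letting $q \searrow 0$, the first summand tends to $h_+(x)$ by Proposition \ref{prop_asymp}, while in the second summand the ratio $\kappa(q)/\hat\kappa(q)$ converges to $C$ by definition and the remaining factor converges to $h_-(x)$ by the dual version just discussed. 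This produces the claimed limit $h_+(x) + C h_-(x)$.

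There is essentially no obstacle beyond bookkeeping: the one point worth noting is that nothing in the proof of Proposition \ref{prop_asymp} used Assumption $(B)$ (only $(A)$, which is self-dual), so invoking its dual form does not require $(\hat B)$; the bounds and limits remain valid in general, even though $h_-$ may vanish on part of $\mR\setminus[a,b]$ when $(\hat B)$ fails.
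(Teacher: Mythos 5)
Your proposal is correct and follows essentially the same route as the paper: the paper likewise splits $\{e_q<T_{[a,b]}\}$ according to whether $\xi_{e_q}>b$ or $\xi_{e_q}<a$, applies Proposition \ref{prop_asymp} and its counterpart for $h_-$ to each piece, and then combines the bounds and the two limits via the ratio $\kappa(q)/\hat\kappa(q)\to C$.
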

\begin{proof}
Let be $x \notin [a,b]$. With Proposition \ref{prop_asymp} and its counterpart for $h_-$ we have
$$\mP^x(e_q < T_{[a,b]},\xi_{e_q}>b) \leq \hat\kappa(q) h_+^q(x) \quad \text{ and } \quad \mP^x(e_q < T_{[a,b]},\xi_{e_q}<a) \leq \kappa(q) h_-^q(x)$$
from which the first claim follows. Furthermore we have again with Proposition \ref{prop_asymp}:
$$\lim_{q \searrow 0} \frac{1}{\hat\kappa(q)} \mP^x(e_q < T_{[a,b]},\xi_{e_q}>b) = h_+(x)$$
and
$$ \lim_{q \searrow 0} \frac{1}{\kappa(q)} \mP^x(e_q < T_{[a,b]},\xi_{e_q}<a) = h_-(x).$$
With this we get
\begin{align*}
&\quad \lim_{q \searrow 0} \frac{1}{\hat\kappa(q)} \mP^x(e_q < T_{[a,b]})\\
&= \lim_{q \searrow 0}\frac{1}{\hat\kappa(q)} \mP^x(e_q < T_{[a,b]},\xi_{e_q}>b) + \lim_{q \searrow 0} \frac{\kappa(q)}{\hat\kappa(q)} \frac{1}{\kappa(q)} \mP^x(e_q < T_{[a,b]},\xi_{e_q}<a)\\
&= h_+(x) + C h_-(x)
\end{align*}
and the proof is complete.
\end{proof}

\begin{proof}[Proof of Theorem \ref{thm_cond}]
We follow a similar strategy as in the proof of Proposition \ref{thm_cond_pos}. First note that since $\lim_{q\searrow 0} \kappa(q)/\hat\kappa(q)$ exists, the ratio is bounded for $q \in (0,1)$ by some $\beta>0$. Hence, with Corollary \ref{cor_asymp} we get
$$\frac{1}{\hat\kappa(q)}\mP^y(e_q < T_{[a,b]}) \leq h_+^q(y) + \frac{\kappa(q)}{\hat\kappa(q)}h_-^q(y) \leq h_+(y) + \beta h_-(y)$$
for all $y \notin [a,b]$. So we use dominated convergence and the second part of Corollary \ref{cor_asymp} to get
\begin{align*}
&\quad \lim\limits_{q \searrow 0} \mP^x(\Lambda, t < e_q | e_q<T_{[a,b]})\\
 &= \lim\limits_{q \searrow 0} \frac{1}{\mP^x(e_q < T_{[a,b]})} \int\limits_{(t,\infty)}q e^{-qs} \mP^x\left(\Lambda, s <T_{[a,b]} \right) \, \dd s  \\
&= \frac{1}{h_+(x)+Ch_-(x)} \lim\limits_{q \searrow 0} \frac{e^{-qt}}{\hat\kappa(q)} \int\limits_{(0,\infty)}q e^{-qs} \mP^x\left(\Lambda, s+t <T_{[a,b]} \right) \, \dd s \\
&= \frac{1}{h_+(x)+Ch_-(x)} \lim\limits_{q \searrow 0} \frac{e^{-qt}}{\hat\kappa(q)} \int\limits_{(0,\infty)} q e^{-qs} \mE^x\Big[ \1_\Lambda  \1_{\{ t <T_{[a,b]} \}} \mP^{\xi_t}(s<T_{[a,b]}) \Big] \, \dd s \\
&= \frac{1}{h_+(x)+Ch_-(x)} \lim\limits_{q \searrow 0} \frac{1}{\hat\kappa(q)} \mE^x\Big[ \1_\Lambda  \1_{\{ t <T_{[a,b]} \}} \mP^{\xi_t}(e_q<T_{[a,b]}) \Big]  \\
&= \frac{1}{h_+(x)+Ch_-(x)} \mE^x\Big[ \1_\Lambda  \1_{\{ t <T_{[a,b]} \}}\lim\limits_{q \searrow 0} \frac{1}{\hat\kappa(q)}  \mP^{\xi_t}(e_q<T_{[a,b]}) \Big]  \\
&= \frac{1}{h_+(x)+Ch_-(x)} \mE^x\Big[ \1_\Lambda  \1_{\{ t <T_{[a,b]} \}} \big(h_+(\xi_t) + Ch_-(\xi_t)\big) \Big].
\end{align*}
\end{proof}

\subsection{Long-time behaviour} Finally, we analyze the transience behavior of the conditioned processes constructed in the previous section.

\begin{proof}[Proof of Proposition \ref{thm_drift+}]
\textbf{Step 1:} We show that $\xi$ under $\mP^x_+$ is almost surely bounded from below. First note that, for $x<a$,
\begin{align*}
\mE^x\Big[ \1_{\{ T_{[a,\infty)} < T_{[a,b]} \}} h_+(\xi_{T_{[a,\infty)}}) \Big] &= \int\limits_{(b,\infty)} h_+(y) \, \nu_1^x(\dd y)\\
&= \sum_{k =0}^\infty\int\limits_{(b,\infty)}  \int\limits _{(b,\infty)} U_-(z-b)\, \nu_{2k} ^y(\dd z) \, \nu_1^x(\dd y) \\
&= \sum_{k =0}^\infty \int\limits_{(b,\infty)} U_-(z-b)\, \nu_{2k+1}^x(\dd z) \\
&= h_+(x).
\end{align*}
For the first equality we used $\nu^x_1(\dd y) = \mP^x(\xi_{T_{[a,\infty)}} \in \dd y, T_{[a,\infty)} < T_{[a,b]})$ for $x<a$, in the second we plugged-in the definition of $h_+(y)$ for $y>b$ and used Fubini's theorem, in the third we used \eqref{cc} and for the final equality we used the definition of $h_+(x)$ for $x<a$. Since $\xi_{T_{(-\infty,c]}}<a$ for $c<a$ it follows, for all $x \in \mR \setminus [a,b]$, that
\begin{align*}
&\quad \mP_+^x(T_{(-\infty,c]} < \infty \text{ for all } c <a)\\
&= \lim_{c \rightarrow -\infty} \mP_+^x(T_{(-\infty,c]} < \infty)\\
&=  \frac{1}{h_+(x)} \lim_{c \rightarrow -\infty} \mE^x \Big[ \1_{\{ T_{(-\infty,c]} < T_{[a,b]} \}} h_+(\xi_{T_{(-\infty,c]}}) \Big] \\
&= \frac{1}{h_+(x)} \lim_{c \rightarrow -\infty} \mE^x \Big[ \1_{\{ T_{(-\infty,c]} < T_{[a,b]} \}} \mE^{\xi_{T_{(-\infty,c]}}} \big[\1_{\{ T_{[a,\infty)} < T_{[a,b]} \}} h_+(\xi_{T_{[a,\infty)}})\big] \Big] \\
&= \frac{1}{h_+(x)} \lim_{c \rightarrow -\infty} \mE^x \Big[ \1_{\{ T_{(-\infty,c]} < T_{[a,b]} \}} \mE^{x} \big[(\1_{\{ T_{[a,\infty)} < T_{[a,b]} \}} h_+(\xi_{T_{[a,\infty)}})) \circ \theta_{T_{(-\infty,c]}} \, | \, \cF_{T_{(-\infty,c]}} \big] \Big],
\end{align*}
where we used again the strong Markov property \eqref{strongMP} with $Y = \1_{\{ T_{[a,\infty)} < T_{[a,b]} \}} h_+(\xi_{T_{[a,\infty)}})$ in the final equality.
According to Theorem 1.3.6 of Chung and Walsh \cite{Chu_Wal_01} it holds that
$$\{ T_{(-\infty,c]} < T_{[a,b]} \} \in \cF_{ T_{(-\infty,c]}} \cap \cF_{T_{[a,b]}} \subseteq \cF_{ T_{(-\infty,c]}}.$$
So we continue for all $x \in \mR \setminus [a,b]$ with
\begin{align*}
&\quad\mE^x \Big[ \1_{\{ T_{(-\infty,c]} < T_{[a,b]} \}} \mE^{x} \big[(\1_{\{ T_{[a,\infty)} < T_{[a,b]} \}} h_+(\xi_{T_{[a,\infty)}})) \circ \theta_{T_{(-\infty,c]}} \, | \, \cF_{T_{(-\infty,c]}} \big] \Big]\\
&=  \mE^x \Big[ \mE^{x} \big[\1_{\{ T_{(-\infty,c]} < T_{[a,b]} \}}(\1_{\{ T_{[a,\infty)} < T_{[a,b]} \}} h_+(\xi_{T_{[a,\infty)}})) \circ \theta_{T_{(-\infty,c]}} \, | \, \cF_{T_{(-\infty,c]}} \big] \Big] \\
&= \mE^x \Big[ \1_{\{ T_{(-\infty,c]} < T_{[a,b]} \}}\big((\1_{\{ T_{[a,\infty)} < T_{[a,b]} \}} h_+(\xi_{T_{[a,\infty)}})) \circ \theta_{T_{(-\infty,c]}} \big) \Big].
\end{align*}
Now consider just $x<a$ and observe
\begin{align*}
&\quad\mE^x \Big[ \1_{\{ T_{(-\infty,c]} < T_{[a,b]} \}}\big((\1_{\{ T_{[a,\infty)} < T_{[a,b]} \}} h_+(\xi_{T_{[a,\infty)}})) \circ \theta_{T_{(-\infty,c]}} \big) \Big]\\
&= \sum_{k=0}^\infty \mE^{x} \Big[\1_{\{ T_{(-\infty,c]} \in [\tilde\tau_{2k},\tilde\tau_{2k+1})\}}\big((\1_{\{ T_{[a,\infty)} < T_{[a,b]} \}} h_+(\xi_{T_{[a,\infty)}})) \circ \theta_{T_{(-\infty,c]}} \big) \Big] \\
&= \sum_{k=0}^\infty \mE^{x} \Big[\1_{\{ T_{(-\infty,c]} \in [\tilde\tau_{2k},\tilde\tau_{2k+1})\}} \1_{\{ \tilde\tau_{2k+1} < T_{[a,b]} \}} h_+(\xi_{\tilde\tau_{2k+1}}) \Big]\\
&= \sum_{k=0}^\infty \mE^{x} \Big[\1_{\{ T_{(-\infty,c]} \in [\tau_{2k},\tau_{2k+1})\}} \1_{\{ \tau_{2k+1} < T_{[a,b]} \}} h_+(\xi_{\tau_{2k+1}}) \Big],
\end{align*}
where $\tilde\tau_k = \min(\tau_k,T_{[a,b]})$ as in the proof of Proposition \ref{prop_asymp}. Combining the above computations gives
\begin{align} \label{eq_help1}
&\quad\mP_+^x(T_{(-\infty,c]} < \infty \text{ for all } c <a)\\
&= \frac{1}{h_+(x)} \lim_{c \rightarrow -\infty} \sum_{k=0}^\infty \mE^{x} \Big[ \1_{\{ T_{(-\infty,c]} \in [\tau_{2k},\tau_{2k+1})\}} \1_{\{ \tau_{2k+1} < T_{[a,b]} \}} h_+(\xi_{\tau_{2k+1}}) \Big] \nonumber
\end{align}
for $x<a$. Our aim is to switch the limit and the sum. In order to justify the dominated convergence theorem it is enough to verify
$$\sum_{k=0}^\infty \mE^{x} \la \1_{\{ \tau_{2k+1} < T_{[a,b]} \}} h_+(\xi_{\tau_{2k+1}}) \ra < \infty.$$
With Proposition \ref{lemma_finite} we have 
\begin{align*}
&\quad \mE^{x} \Big[ \1_{\{ \tau_{2k+1} < T_{[a,b]} \}} h_+(\xi_{\tau_{2k+1}}) \Big] \\
&\leq c_1 \mE^{x} \Big[ \1_{\{ \tau_{2k+1} < T_{[a,b]} \}} U_-(\xi_{\tau_{2k+1}}-b) \Big] + c_3 \mP^x(\tau_{2k+1} < T_{[a,b]}) \\
&\leq c_1 \mE^{x} \Big[ \1_{\{ K^\dag \geq 2k+1 \}} U_-(\xi_{\tau_{2k+1}}-b) \Big] + c_3 \nu_{2k+1}^x((b,\infty)) \\
&\leq c_1 \mE^{x} \Big[ \1_{\{ K^\dag \geq 2k+1 \}} U_-(\xi_{\tau_{2k+1}}-b) \Big] + c_3 \gamma^{2k}
\end{align*}
where $c_1,c_3$ and $\gamma$ are the constants from Proposition \ref{lemma_finite} and its proof. It follows that
\begin{align*}
&\quad\sum_{k=0}^\infty \mE^{x} \big[ \1_{\{ \tau_{2k+1} < T_{[a,b]} \}} h_+(\xi_{\tau_{2k+1}}) \big] \\
&\leq c_1 \sum_{k=0}^\infty  \mE^{x} \big[ \1_{\{ K^\dag \geq 2k+1 \}} U_-(\xi_{\tau_{2k+1}}-b) \big] + c_3 \sum_{k=0}^\infty \gamma^{2k}\\
&= c_1 h_+(x) + \frac{c_3}{1-\gamma^2}<\infty.
\end{align*}
So we can switch the limit and the integral in \eqref{eq_help1}. With the same upper bound for every summand for itself we can even move the limit inside the expectation. Hence,
\begin{align*}
&\quad \mP_+^x(T_{(-\infty,c]} < \infty \text{ for all } c <a)\\
&= \frac{1}{h_+(x)} \sum_{k=0}^\infty \mE^{x} \la \lim_{c \rightarrow -\infty} \1_{\{ T_{(-\infty,c]} \in [\tau_{2k},\tau_{2k+1})\}} \1_{\{ \tau_{2k+1} < T_{[a,b]} \}} h_+(\xi_{\tau_{2k+1}}) \ra.
\end{align*}
Since $\xi$ oscillates (which implies $\tau_k<\infty$ $\mP^x$-almost surely) we obtain that $\1_{\{ T_{(-\infty,c]} \in [\tau_{2k},\tau_{2k+1}) \}}$ converges to $0$ almost surely under $\mP^x$ for $c \rightarrow -\infty$. Hence,
$$\mP_+^x(T_{(-\infty,c]} < \infty \text{ for all } c <a) = 0$$
for $x<a$. For $x>b$ it is proved analogously that
\begin{align*}
&\quad \mP_+^x(T_{(-\infty,c]} < \infty \text{ for all } c <a)\\
&= \frac{1}{h_+(x)} \lim_{c \rightarrow -\infty} \sum_{k=0}^\infty \mE^{x} \big[\1_{\{ T_{(-\infty,c]} \in [\tau_{2k+1},\tau_{2k+2})\}} \1_{\{ \tau_{2k+2} < T_{[a,b]} \}} h_+(\xi_{\tau_{2k+2}})) \big] 
\end{align*}
and, with the above argumentation, we also find that $\mP_+^x(T_{(-\infty,c]} < \infty \text{ for all } c <a) = 0$ for $x>b$. This finishes the arguments for Step 1.\\ \smallskip

\textbf{Step 2:} In the second step we show that $\xi$ is transient under $\mP^x_+$, i.e. only spends finite time in sets of the form $[d,a)\cup(b,c]$ for $d<a$ and $c>b$. Actually, we even show that the expected occupation is finite:
\begin{align}\label{eq_trans}
\begin{split}
&\quad \mE_+^x \Big[ \int\limits_{[0,\infty)} \1_{\{ \xi_t \in [d,a)\cup(b,c] \}} \dd t \Big] \\
&=  \int\limits_{[0,\infty)} \mP_+^x(\xi_t \in [d,a)\cup(b,c]) \, \dd t \\
&= \int\limits_{[0,\infty)} \mE^x \Big[ \1_{\{\xi_t \in [d,a)\cup(b,c] \}} \1_{\{ t < T_{[a,b]} \}} \frac{h_+(\xi_t)}{h_+(x)} \Big] \, \dd t  \\
&\leq \frac{1}{h_+(x)} \sup\limits_{y \in [d,a)\cup(b,c]} h_+(y) \int\limits_{[0,\infty)} \mE^x \Big[ \1_{\{\xi_t \in [d,a)\cup(b,c] \}} \1_{\{ t < T_{[a,b]} \}} \Big] \, \dd t. 
\end{split}
\end{align}
Recalling Proposition \ref{lemma_finite}, $\sup_{y \in [d,a)\cup(b,c]} h_+(y)$ is finite and it remains to show finiteness of
$$\int\limits_{[0,\infty)} \mE^x \Big[ \1_{\{\xi_t \in [d,a)\cup(b,c] \}} \1_{\{ t < T_{[a,b]} \}} \Big] \, \dd t$$
which is just the potential of $[d,a)\cup(b,c]$ of the process killed on entering $[a,b]$. To abbreviate we denote the potential of $(\xi,\mP^x)$ killed on entering a Borel set $B$ by $U^B(x,\dd y)$. It follows
\begin{align*}
U^{[a,b]}(x,[d,a)\cup(b,c]) = \sum\limits_{k=0}^\infty \left(U^{(-\infty,b]}(\nu_{2k}^x,(b,c]) + U^{[a,\infty)}(\nu_{2k+1}^x,[d,a)) \right).
\end{align*}
To compute the righthand side we apply Proposition VI.20 of Bertoin \cite{Bert_01} for $y >b$:
\begin{align*}
U^{(-\infty,b]}(y,(b,c]) &= U^{(-\infty,0]}(y-b,(0,c-b]) \\
&= \int\limits_{(0,c-b]} \int\limits_{[(y-b-u)^+,y-b]} \, U_+(\dd u +v-(y-b)) \,U_-(\dd v) \\
&= \int\limits_{[0,y-b]} \Big( \int\limits_{(0,c-b]} \1_{\{ u \geq y-b-v \}} \, U_+(\dd u -(y-b-v)) \Big) \, U_-(\dd v) \\
&= \int\limits_{[0,y-b]} U_+(c+v-y)  \,U_-(\dd v) \\
&\leq U_+(c-b) U_-(y-b).
\end{align*}
It holds analogously that $U^{[a,\infty)}(y,[d,a)) \leq U_-(a-d) U_+(a-y)$ for $y > a$. So we have
\begin{align*}
U_{[a,b]}(x,[d,a)\cup(b,c]) &\leq U_+(c-b) \sum\limits_{k=0}^\infty \int\limits_{(b,\infty)} U_-(y-b) \, \nu_{2k}^x(\dd y)\\
& \quad + U_-(a-d) \sum\limits_{k=0}^\infty \int\limits_{(-\infty,a)} U_+(a-y) \, \nu_{2k+1}^x(\dd y) \\
&=U_+(c-b)h_+(x) + U_-(a-d))h_-(x)<\infty.
\end{align*}

It follows in particular that the time the process $(\xi,\mP^x_+)$ spends in sets of the form $[d,a)\cup(b,c]$ is finite almost surely. Together with the first result that the process is bounded below almost surely and that the process is conservative it follows that $\lim_{t \rightarrow \infty} \xi_t = +\infty$ almost surely under $\mP^x_+$.
\end{proof}

\begin{proof}[Proof of Theorem \ref{thm_drifth}]
The proof strategy is similar to the one above. Transience of the conditioned process is verified again by computing the occupation measure using the representation of the conditioned process as $h$-transform. The computation is in analogy to (\ref{eq_trans}), using that $h=h_++C h_-$ is bounded by Proposition \ref{lemma_finite}.\smallskip

Next, recall from the counterpart of Proposition \ref{thm_drift+} for $\mP^x_-$ that under ($\hat B$),
\begin{align*}
	\quad \mP^x_-(T_{(-\infty,c]}<\infty) = 1, \quad c<a
\end{align*}
for all $x \in \mR\setminus [a,b]$. Since \eqref{eq_htrafo_stoppingtime} implies 
$$\mP^x_-(T_{(-\infty,c]}<\infty) = \frac{1}{h_-(x)}\mE^x \la \1_{\{ T_{(-\infty,c]} < T_{[a,b]} \}} h_-(\xi_{T_{(-\infty,c]}}) \ra$$
we deduce
\begin{align}\label{help}
\mE^x \la \1_{\{ T_{(-\infty,c]} < T_{[a,b]} \}} h_-(\xi_{T_{(-\infty,c]}}) \ra = h_-(x), \quad c<a
\end{align}
for all $x \in \mR\setminus [a,b]$ under ($\hat B$). If ($\hat B$) fails we know
$$h_-(x) =  \begin{cases}
0 \quad &\text{if } x>b \\
U_+(a-x) \quad &\text{if } x <a
\end{cases}.$$
Let us check if \eqref{help} holds in this case, too. If $x>b$ the left-hand side of \eqref{help} is $0$ (because there are no jumps bigger than $b-a$), as well as the right-hand side. For $x>a$ the measure $\mP_-^x$ corresponds to the process conditioned to stay below $a$ which is known to drift to $-\infty$ (see Chaumont and Doney \cite{Chau_Don_01}). In particular it holds 
$$\mP^x_-(T_{(-\infty,c]}<\infty) = 1, \quad c<a$$
from which we can deduce \eqref{help} in the same way as before. So \eqref{help} holds for all $x \in \mR\setminus [a,b]$ just under (A).\smallskip

Again using \eqref{eq_htrafo_stoppingtime} yields
\begin{align*}
&\quad \mP_\updownarrow^x(T_{(-\infty,c]} <\infty)\\
&= \frac{1}{h(x)} \Big( \mE^x \big[ \1_{\{ T_{(-\infty,c]} < T_{[a,b]} \}} h_+(\xi_{T_{(-\infty,c]}}) \big] + \mE^x \big[ \1_{\{ T_{(-\infty,c]} < T_{[a,b]} \}} Ch_-(\xi_{T_{(-\infty,c]}}) \big] \Big)\\
 &= \frac{1}{h(x)} \mE^x \big[ \1_{\{ T_{(-\infty,c]} < T_{[a,b]} \}} h_+(\xi_{T_{(-\infty,c]}}) \big] + \frac{Ch_-(x)}{h(x)}.
\end{align*}
In the proof of Proposition \ref{thm_drift+} we have already seen that $\mE^x \big[ \1_{\{ T_{(-\infty,c]} < T_{[a,b]} \}} h_+(\xi_{T_{(-\infty,c]}}) \big]$ vanishes for $c \rightarrow -\infty$, hence,
$$\mP_\updownarrow^x(\xi \text{ is unbounded below}) = \mP_\updownarrow^x(T_{(-\infty,c]} <\infty \text{ for all } c<a)= \frac{Ch_-(x)}{h(x)}.$$
So we get 
$$\mP_\updownarrow^x(\xi \text{ is bounded below}) = 1-\frac{Ch_-(x)}{h(x)} = \frac{h_+(x)}{h(x)}$$
and, because of transience,
$$\frac{h_+(x)}{h(x)}=\mP_\updownarrow^x(\xi \text{ is bounded below}) = \mP_\updownarrow^x(\lim_{t \rightarrow \infty} \xi_t = \infty).$$
Analogously one derives $\mP_\updownarrow^x(\lim_{t \rightarrow \infty} \xi_t = \infty)= \frac{Ch_-(x)}{h(x)}$ and the proof is complete.
\end{proof}

\section{Extension to transient Lévy processes}

When conditioning a process to avoid an interval, the most interesting case is
when the process is recurrent; if it is transient, it may avoid the interval with
positive probability, and things become simpler. On the other hand, the conditionings
in Proposition \ref{thm_cond_pos}, to avoid the interval while finishing above (or below) it,
may still be non-trivial.
In this section, we drop Assumption $(A)$, and require only that $\xi$ is not a compound
Poisson process and does not oscillate. In particular, we do not assume that $\xi$ has finite
second moments; only for the study of $h_-$ do we need further conditions.
\smallskip

Without loss of generality, we assume from now on that $\xi$ drifts to $+\infty$, and indicate which of our results still hold and which need modification.
Under this assumption, the function $h$ defined by \eqref{h} simplifies to $h_+$. This can be seen from the fact that $\kappa(0) = 0 < \hat\kappa(0)$, which implies $C=\lim_{q\searrow 0}\frac{\kappa(q)}{\hat\kappa(q)} = 0$. 

\subsection{Study of \texorpdfstring{$h=h_+$}{h = h\textunderscore +}}

For the study of $h$ (which is now equal to $h_+$) we need to distinguish two cases
based on whether or not condition $(B)$ is satisfied.

\subsubsection{Condition \texorpdfstring{$(B)$}{(B)} holds}

Since the L\'evy process is transient, the event $\{T_{[a,b]} = \infty\}$ has positive probability for every starting point. The conditioning simplifies dramatically and our results are still valid,
as we now demonstrate. Let $\ell(x):= \mP^x(T_{[a,b]} = \infty)$ for $x\notin [a,b]$. This is easily seen to be harmonic using the strong Markov property:
\begin{align}\label{eq_prob_harm}
\begin{split}
\mE^x\bigl[ \1_{\{t < T_{[a,b]}\}} \ell(\xi_t)\bigr]
&=\mE^x \big[ \1_{\{ t < T_{[a,b]} \}} \mP^{\xi_t}(T_{[a,b]} = \infty) \big] \\
&= \lim_{s \rightarrow \infty} \mE^x \big[ \1_{\{ t < T_{[a,b]} \}} \mP^{\xi_t}(T_{[a,b]} >s) \big] \\
&=\lim_{s \rightarrow \infty} \mP^{x}(T_{[a,b]} > t+s)  \\
&= \mP^{x}(T_{[a,b]} = \infty).
\end{split}
\end{align}
Transience ensures that $\ell$ is a positive harmonic function. We next show that $\ell$ is indeed a multiple of $h=h_+$. To do so we will use the identity $\hat{\kappa}(q)U_-^q(x) = \mP^x(e_q<T_{(-\infty,0]}),$ where $e_q$ is an independent exponentially distributed random variable with parameter $q>0$ (see Kyprianou \cite{Kyp_01}, Section 13.2.1 for a general Lévy process). Since $\xi$ drifts to $+\infty$, we have $\hat{\kappa}(0)>0$, and hence
$$\hat{\kappa}(0)U_-(x) = \mP^x(T_{(-\infty,0]}=\infty), \quad x >0.$$
The idea is to separate the two-sided entrance problem in infinitely many one-sided entrance problems and use the strong Markov property to combine them. For $x>b$, using the strong Markov property, we find
\begin{align*}
&\quad \mP^x(T_{[a,b]} = \infty)\\
&=  \mP^x(T_{(-\infty,b]} =\infty) + \mP^x(T_{[a,b]} = \infty, T_{(-\infty,b]} <\infty) \\
&=  \mP^x(T_{(-\infty,b]} =\infty) + \mE^x \Big[ \1_{\{ T_{(-\infty,b]} <\infty, \xi_{T_{(-\infty,b]}<a} \}} \mP^{\xi_{T_{(-\infty,b]}}}(T_{[a,b]}=\infty) \Big] \\
&=  \hat{\kappa}(0) U_-(x-b) + \mE^x \Big[ \1_{\{ T_{(-\infty,b]} <\infty, \xi_{T_{(-\infty,b]}<a} \}} \mE^{\xi_{T_{(-\infty,b]}}}\big[ \1_{\{ \xi_{T_{[a,\infty)}} >b \}} \mP^{\xi_{T_{[a,\infty)}}}(T_{[a,b]}=\infty) \big] \Big] \\
&=  \hat{\kappa}(0) U_-(x-b) + \int\limits_{(b,\infty)} \mP^{y}(T_{[a,b]}=\infty) \, \nu_2^x(\dd y).
\end{align*}
Now we split up $\mP^{y}(T_{[a,b]}=\infty)$ in the same manner, i.e.,
$$\mP^{y}(T_{[a,b]}=\infty) = \hat{\kappa}(0) U_-(y-b) + \int\limits_{(b,\infty)} \mP^{z}(T_{[a,b]}=\infty) \, \nu_2^x(\dd z).$$
Using $\int\limits_{(b,\infty)} \nu^z_{2}(\dd y) \, \nu_{2}^x(\dd z) = \nu^x_{4}(\dd y)$ from \eqref{cc} yields
\begin{align*}
&\quad \mP^x(T_{[a,b]} = \infty)\\
&= \hat{\kappa}(0) \Big( U_-(x-b) + \int\limits_{(b,\infty)} U_-(y-b) \, \nu_2^x(\dd y) \Big) + \int\limits_{(b,\infty)} \mP^{y}(T_{[a,b]}=\infty) \, \nu_4^x(\dd y).
\end{align*}
By induction the following series representation is obtained:
$$\mP^x(T_{[a,b]} = \infty) = \hat{\kappa}(0) \sum_{k=0}^\infty \int\limits_{(b,\infty)} U_-(y-b) \, \nu_{2k}^x(\dd y).$$
For $x<a$ a similar computation can be carried out, and we obtain
\begin{align*}
\ell(x) = \mP^x(T_{[a,b]} = \infty) &= \begin{cases}
\hat{\kappa}(0) \sum\limits_{k=0}^\infty \int\limits_{(b,\infty)} U_-(y-b) \, \nu_{2k}^x(\dd y)&\quad \text{if } x>b \\
\hat{\kappa}(0) \sum\limits_{k=0}^\infty \int\limits_{(b,\infty)} U_-(y-b) \, \nu_{2k+1}^x(\dd y) &\quad \text{if } x<a
\end{cases}\\
&= \hat{\kappa}(0) h_+(x) = \hat\kappa(0) h(x).
\end{align*}

\smallskip

\textbf{Theorem \ref{thm_harmonic}:}
This is a consequence of the discussion above.\smallskip 

\textbf{Theorem \ref{thm_cond}:} Since we condition here on a positive probability event, the $h$-transform and the conditioning are related in a standard way, using the strong Markov property and integrating out $e_q$:
\begin{align*}
\mE^x \Big[ \1_{\Lambda} \1_{\{ t < T_{[a,b]} \}} \frac{\ell(\xi_t)}{\ell(x)} \Big] &= \frac{1}{ \mP^{x}(T_{[a,b]} = \infty)} \mE^x  \Big[\1_{\Lambda} \1_{\{ t < T_{[a,b]} \}} \mP^{\xi_t}(T_{[a,b]} = \infty) \Big] \\
&= \lim_{q \searrow 0} \frac{1}{ \mP^{x}(e_q <T_{[a,b]})} \mE^x \Big[\1_{\Lambda} \1_{\{ t < T_{[a,b]} \}} \mP^{\xi_t}(e_q < T_{[a,b]}) \Big]\\
&= \lim_{q \searrow 0} \frac{ \mP^{x}(\Lambda, t+e_q < T_{[a,b]}) }{ \mP^{x}(e_q <T_{[a,b]})}\\
&= \lim_{q \searrow 0} \frac{e^{qt} \mP^{x}(\Lambda, t<e_q < T_{[a,b]}) }{ \mP^{x}(e_q <T_{[a,b]})}\\
&= \lim_{q \searrow 0} \mP^x(\Lambda, t< e_q \,|\, e_q < T_{[a,b]}),
\end{align*}
for $\Lambda \in \cF_t$, $t \geq 0$.\smallskip

\textbf{Proposition \ref{thm_cond_pos}:} The conditioning of Proposition \ref{thm_cond_pos} is equivalent to the conditioning of Theorem \ref{thm_cond}, since the additional condition to stay above the interval at late time vanishes in the limit due to the transience towards $+\infty$. Since $h=h_+$ the result of Proposition \ref{thm_cond_pos} follows.\smallskip

\textbf{Proposition \ref{thm_drift+} and Theorem \ref{thm_drifth}:} Since the
conditioned measure is a restriction of the original one,
the long-time behaviour of the conditioned process is identical to that of the original process. Hence, the statements of Proposition \ref{thm_drift+} and Theorem \ref{thm_drifth} hold.
\subsubsection{Condition $(B)$ fails}

The definition of $h_+$ in this case simplifies to 
\[
 h_+(x) =
  \begin{cases}
    U_-(x-b) & \text{if } x > b \\
    0 &  \text{if } x< a
  \end{cases}.
\]
This function is plainly not positive everywhere. It is nonetheless harmonic
for the process killed on entering $[a,b]$. The conditionings in Theorem \ref{thm_cond}
and Proposition \ref{thm_cond_pos} can still be carried out but, as we now prove,
the results are somewhat
different.
\smallskip

Let $h_{\uparrow}: (b,\infty) \to [0,\infty)$ be given by $h_{\uparrow}(x) = U_-(x-b)$,
the restriction of $h_+$ to $(b,\infty)$. As shown by Chaumont and Doney
\cite{Chau_Don_01}, this function is harmonic for the process $\xi$
killed on entering $(-\infty,b]$, and the $h$-transform of this process
using $h_{\uparrow}$ is the process $\xi$ conditioned to avoid $(-\infty,b]$.
We will write $(\mP^x_\uparrow)_{x\in (b,\infty)}$ for the probabilities
associated with this Markov process.
\smallskip

Consider now the conditioning of \textbf{Proposition \ref{thm_cond_pos}}. When $x > b$ the process cannot cross below the set $[a,b]$ and return above it
without hitting the set. Therefore, we have that
\[
  \lim_{q\searrow 0} \mP^x(\Lambda, t<e_q \mid e_q < T_{[a,b]}, \xi_{e_q} > b)
  = \lim_{q\searrow 0} \mP^x(\Lambda, t<e_q \mid e_q < T_{(-\infty,b]})
  = \mP^x_\uparrow(\Lambda),
\]
the last equality being due to Chaumont and Doney \cite{Chau_Don_01}. For
$x<a$, $\mP^x(e_q<T_{[a,b]},\xi_{e_q}>b)=0$ for every $q> 0$, so the
conditioning does not have any sense.
In total, the conditioning of Proposition \ref{thm_cond_pos}
reduces to conditioning $\xi$ to avoid $(-\infty,b)$.
\smallskip

We turn next to the conditioning in \textbf{Theorem \ref{thm_cond}}.
Let us define $h_{\downarrow}: (-\infty,a) \to [0,\infty)$ by
$h_{\downarrow}(x) = U_+(a-x)$, which is a positive harmonic function
for the process killed on entering $[a,\infty)$ resulting in the process
conditioned to avoid $[a,\infty)$ when $h$-transformed with $h_\downarrow$. As before, we write $(\mP^x_{\downarrow})_{x\in (-\infty,a)}$
for the probabilities associated with the conditioned process,
which is killed at its lifetime $\zeta$.
By the same reasoning in the case where $(B)$ holds,
$\lim_{q\searrow 0}\mP^x(T_{[a,b]}>e_q) = \hat{\kappa}(0) h_+ (x)=\hat{\kappa}(0) h_\uparrow (x)$ when $x>b$;
and, when $x < a$, using the asymptotics of $T_{[a,\infty)}$ which 
we have already seen, we obtain
$\mP^x(T_{[a,b]}>e_q) = \mP^x(T_{[a,\infty)}>e_q) \sim \kappa(q) U_+(a-x)$
as $q \searrow 0$, since $\xi$ cannot jump over $[a,b]$ from below.
If $x>b$, and $\Lambda\in \cF_t$,
the same technique as in the proof of Theorem \ref{thm_cond} gives rise to the
calculation
  \begin{align*}
    &\quad
    \lim_{q \searrow 0} \mP^x(\Lambda, t<e_q \mid e_q< T_{[a,b]}) \\
    &= \frac{1}{\hat\kappa(0)h_\uparrow(x)}\mE^x\bigl[\1_\Lambda \1_{\{t<T_{[a,b]}\}}
    \lim_{q\searrow 0} \mP^{\xi_t}(e_q<T_{[a,b]}) \bigr]
    \\
    &= \frac{1}{\hat\kappa(0)h_\uparrow(x)}\mE^x\Bigl[\1_\Lambda \1_{\{t<T_{[a,b]}\}}
    \lim_{q\searrow 0}
    \bigl( \1_{\{t<T_{(-\infty,b]}\}}\hat\kappa(0) h_\uparrow(\xi_t)
     + \1_{\{t > T_{(-\infty,b]}\}} \kappa(q) U_+(a-\xi_t) \bigr )\Bigr] \\
    &= \frac{1}{h_\uparrow(x)} \mE^x[ h_+(\xi_t) \1_\Lambda \1_{\{t<T_{(-\infty,b]}\}}]
    = \mP^x_\uparrow(\Lambda).
  \end{align*}
Similarly, if $x<a$, we obtain
  $\lim_{q \searrow 0} \mP^x(\Lambda, t<e_q \mid e_q< T_{[a,b]})
  = \mP_\downarrow^x(\Lambda, t < \zeta)$.
\smallskip
  
This shows that the conditioning from Theorem \ref{thm_cond} leads
not to a single Doob $h$-transform of a killed Lévy process, but rather to
a Markov process which behaves entirely differently depending on whether
it is started above or below the interval. The long-time 
behaviour can be deduced from Chaumont and Doney \cite{Chau_Don_01}:
the conditioned process approaches $+\infty$ when started above $b$,
and is killed  when started below $a$.

\subsection{Study of \texorpdfstring{$h_-$}{h\textunderscore -}}
This section is kept informal; the claims can be proved by an adaptation of arguments developed in Section \ref{sec_proofs}.\smallskip

 In order to study $h_-$ we need to assume that $\mE[H_1]<\infty$ and $\hat\mE[H_1] <\infty$. Note that here the descending ladder height subordinator has finite lifetime $\zeta$,
so we understand
$\hat\mE[H_1] = \hat\mE[H_1 \1_{1<\zeta}]$. The function $h_-$ is merely superharmonic,
in the sense that
	\[ \mE^x[h_-(\xi_t) \1_{\{ t<T_{[a,b]} \}}] \le h_-(x),\quad x \in \mR\setminus [a,b].
	\]
We may still define the superharmonic transform
\[ \mP^x_-(\Lambda, t < \zeta) = \mE^x\left[ \1_\Lambda \1_{\{ t<T_{[a,b]} \}} \frac{h_-(\xi_t)}{h_-(x)}\right],
  \quad x \in \mR\setminus [a,b],
\]
but the transformed process is now a killed Markov process, with lifetime $\zeta$.
\smallskip

The dual version of the conditioning of \textbf{Proposition \ref{thm_cond_pos}} is then
given by
\begin{equation}\label{eq_subMarkov}
  \mP^x_{-}(\Lambda, t < \zeta)
  = \lim_{q\searrow 0} \mP^x(\Lambda, t<e_q \mid e_q < T_{[a,b]}, \xi_{e_q}<a),\quad x \in \mR\setminus [a,b],
\end{equation}
and gives rise to a killed strong Markov process. This is a generalization of the subordinator conditioned to stay below a level as studied in Kyprianou et al.\ \cite{Kyp_Riv_Sen_02}.

\bibliographystyle{abbrvnat}
\bibliography{references}

\begin{thebibliography}{23}
\providecommand{\natexlab}[1]{#1}
\providecommand{\url}[1]{\texttt{#1}}
\expandafter\ifx\csname urlstyle\endcsname\relax
  \providecommand{\doi}[1]{doi: #1}\else
  \providecommand{\doi}{doi: \begingroup \urlstyle{rm}\Url}\fi

\bibitem[Bertoin(1996)]{Bert_01}
J.~Bertoin.
\newblock \emph{L{\'e}vy processes}.
\newblock Cambridge Tracts in Mathematics. Cambridge University Press, 1996.

\bibitem[Bertoin and Doney(1994)]{Bert_Don_01}
J.~Bertoin and R.~A. Doney.
\newblock On conditioning a random walk to stay nonnegative.
\newblock \emph{Ann. Probab.}, 22\penalty0 (4):\penalty0 2152--2167, 1994.

\bibitem[Bertoin and Savov(2011)]{Bert_Sav_01}
J.~Bertoin and M.~Savov.
\newblock Some applications of duality for {L}\'evy processes in a half-line.
\newblock \emph{Bull. Lond. Math. Soc.}, 43\penalty0 (1):\penalty0 97--110,
  2011.

\bibitem[Bertoin et~al.(1999)Bertoin, van Harn, and Steutel]{Bert_Har_Ste}
J.~Bertoin, K.~van Harn, and F.~W. Steutel.
\newblock Renewal theory and level passage by subordinators.
\newblock \emph{Statistics and Probability Letters}, 45\penalty0 (1):\penalty0
  65--69, 1999.

\bibitem[Chaumont and Doney(2005)]{Chau_Don_01}
L.~Chaumont and R.~A. Doney.
\newblock On {L}\'evy processes conditioned to stay positive.
\newblock \emph{Electron. J. Probab.}, 10\penalty0 (28):\penalty0 948--961,
  2005.

\bibitem[Chung and Walsh(2005)]{Chu_Wal_01}
K.~Chung and J.~Walsh.
\newblock \emph{Markov Processes, Brownian Motion, and Time Symmetry}.
\newblock Grundlehren der mathematischen Wissenschaften. Springer New York,
  2005.

\bibitem[Denisov and Wachtel(2015)]{Den_Wac_01}
D.~Denisov and V.~Wachtel.
\newblock Random walks in cones.
\newblock \emph{Ann. Probab.}, 43\penalty0 (3):\penalty0 992--1044, 2015.

\bibitem[D{\"o}ring et~al.(2018)D{\"o}ring, Kyprianou, and
  Weissmann]{Doer_Kyp_Wei_01}
L.~D{\"o}ring, A.~E. Kyprianou, and P.~Weissmann.
\newblock Stable process conditioned to avoid an interval.
\newblock \emph{arXiv:1802.07223}, 2018.

\bibitem[Kou and Wang(2003)]{Kou_Wan_01}
S.~G. Kou and H.~Wang.
\newblock First passage times of a jump diffusion process.
\newblock \emph{Adv. Appl. Prob.}, 35\penalty0 (2):\penalty0 504--531, 2003.

\bibitem[Kyprianou(2014)]{Kyp_01}
A.~E. Kyprianou.
\newblock \emph{Fluctuations of {L}\'evy processes with applications}.
\newblock Springer Berlin Heidelberg, second edition, 2014.

\bibitem[Kyprianou(2016)]{Kyp_02}
A.~E. Kyprianou.
\newblock Deep factorisation of the stable process.
\newblock \emph{Electron. J. Probab.}, 21\penalty0 (23):\penalty0 1--28, 2016.

\bibitem[Kyprianou et~al.(2015)Kyprianou, Rivero, and
  Satitkanitkul]{Kyp_Riv_Sat_01}
A.~E. Kyprianou, V.~Rivero, and W.~Satitkanitkul.
\newblock Conditioned real self-similar {M}arkov processes.
\newblock \emph{arXiv:1510.01781}, 2015.

\bibitem[Kyprianou et~al.(2017)Kyprianou, Rivero, and {\c
  S}eng{\"u}l]{Kyp_Riv_Sen_02}
A.~E. Kyprianou, V.~Rivero, and B.~{\c S}eng{\"u}l.
\newblock Conditioning subordinators embedded in {M}arkov processes.
\newblock \emph{Stochastic Process. Appl.}, 127\penalty0 (4):\penalty0
  1234--1254, 2017.

\bibitem[Kyprianou et~al.(2018{\natexlab{a}})Kyprianou, Rivero, and
  Satitkanitkul]{Kyp_Riv_Sat_02}
A.~E. Kyprianou, V.~Rivero, and W.~Satitkanitkul.
\newblock Stable processes in a cone.
\newblock \emph{arXiv:1804.08393}, 2018{\natexlab{a}}.

\bibitem[Kyprianou et~al.(2018{\natexlab{b}})Kyprianou, Rivero, and {\c
  S}eng{\"u}l]{Kyp_Riv_Sen_01}
A.~E. Kyprianou, V.~Rivero, and B.~{\c S}eng{\"u}l.
\newblock Deep factorisation of the stable process {II}: potentials and
  applications.
\newblock \emph{Ann. I. H. Poincar{\'e}}, 54:\penalty0 343--362,
  2018{\natexlab{b}}.

\bibitem[Lambert(2000)]{Lam_01}
A.~Lambert.
\newblock Completely asymmetric {L}{\'e}vy processes confined in a finite
  interval.
\newblock \emph{Ann. I. H. Poincar{\'e}}, 36\penalty0 (2):\penalty0 251--274,
  2000.

\bibitem[{Pant{\'{\i}}}(2017)]{Pan_01}
H.~{Pant{\'{\i}}}.
\newblock On {L}{\'e}vy processes conditioned to avoid zero.
\newblock \emph{Lat. Am. J. Probab. Math. Stat.}, 17:\penalty0 657--690, 2017.

\bibitem[Patie and Savov(2016)]{Pat_Sav_01}
P.~Patie and M.~Savov.
\newblock Bernstein-gamma functions and exponential functionals of {L}{\'e}vy
  processes.
\newblock \emph{arXiv:1604.05960}, 2016.

\bibitem[Revuz and Yor(2004)]{Rev_Yor_01}
D.~Revuz and M.~Yor.
\newblock \emph{Continuous Martingales and Brownian Motion}.
\newblock Grundlehren der mathematischen Wissenschaften. Springer Berlin
  Heidelberg, 2004.

\bibitem[Rogozin(1973)]{Rog_01}
B.~A. Rogozin.
\newblock The distribution of the first hit for stable and asymptotically
  stable walks on an interval.
\newblock \emph{Theory Prob. Appl.}, 17\penalty0 (2):\penalty0 332--338, 1973.

\bibitem[Sato(1999)]{Sat_01}
K.~Sato.
\newblock \emph{L\'evy processes and infinitely divisible distributions}.
\newblock Cambridge Studies in Advanced Mathematics. Cambridge University
  Press, 1999.

\bibitem[Vysotsky(2015)]{Vys_01}
V.~Vysotsky.
\newblock Limit theorems for random walks that avoid bounded sets, with
  applications to the largest gap problem.
\newblock \emph{Stochastic Process. Appl.}, 125\penalty0 (5):\penalty0
  1886--1910, 2015.

\bibitem[Yano(2013)]{Yan_01}
K.~Yano.
\newblock On harmonic function for the killed process upon hitting zero of
  asymmetric {L}{\'e}vy processes.
\newblock \emph{J. Math-for-Ind}, 5A:\penalty0 17--24, 2013.

\end{thebibliography}

\end{document}